    \newenvironment{pdfpic}{}{}
\newtheorem{theorem}{Theorem}[section]
\newtheorem{lemma}[theorem]{Lemma}
\numberwithin{equation}{section}  
  \newcounter{mnote}
  \let\oldmarginpar\marginpar
    \renewcommand\marginpar[1]{\-\oldmarginpar[\raggedleft\footnotesize #1]%
    {\raggedright\footnotesize #1}}
\definecolor{myblue}{rgb}{0.2,0.2,0.7}
\definecolor{mygreen}{rgb}{0,0.6,0}
\definecolor{mycyan}{rgb}{0,0.6,0.6}
\definecolor{myred}{rgb}{0.9,0.2,0.2}
\definecolor{mymagenta}{rgb}{0.9,0.2,0.9}
\definecolor{mywhite}{rgb}{1.0,1.0,1.0}
\definecolor{myblack}{rgb}{0.0,0.0,0.0}
\newcommand{\beq}{\begin{equation}}
\newcommand{\eeq}{\end{equation}}
\newcommand{\beqa}{\begin{eqnarray}}
\newcommand{\eeqa}{\end{eqnarray}}
\begin{document}

\title[Finite Element Approximation of the Nonlinear Poisson-Boltzmann Equation]
      {The Finite Element Approximation of the \\
       Nonlinear Poisson--Boltzmann Equation}

\author[L. Chen]{Long Chen}
\email{clong@ucsd.edu}
\address{Department of Mathematics, University of California at San Diego, La Jolla, CA 92093.}
\thanks{The first author was supported in part by NSF awards 0411723 and 022560, in part by DOE awards DE-FG02-04ER25620 and DE-FG02-05ER25707, and in part by NIH award P41RR08605.} 

\author[M. Holst]{Michael Holst}
\email{mholst@math.ucsd.edu}
\address{Department of Mathematics, University of California at San Diego, La Jolla, CA 92093}
\thanks{The second author was supported in part by NSF awards 0411723 and 022560, in part by DOE awards DE-FG02-04ER25620 and DE-FG02-05ER25707, and in part by NIH award P41RR08605.}

\author[J. Xu]{Jinchao Xu}
\email{xu@math.psu.edu}
\address{The School of Mathematical Science, Peking University, Beijing, 100871 China and Department of Mathematics, Pennsylvania State University, University Park, PA 16801}
\thanks{The third author was supported in part by NSF DMS0308946, DMS-0619587, DMS-0609727, and NSFC-10528102.}

\date{November 20, 2006}

\keywords{nonlinear Poisson--Boltzmann equation, finite element methods, a priori and a posteriori error estimate, convergence of adaptive methods}

\begin{abstract}
A widely used electrostatics model in the biomolecular modeling 
community, the nonlinear Poisson--Boltzmann equation, along with its 
finite element approximation, are analyzed in this paper. A regularized 
Poisson--Boltzmann equation is introduced as an auxiliary problem, making 
it possible to study the original nonlinear equation with delta distribution 
sources. A priori error estimates for the finite element approximation
are obtained for the regularized Poisson--Boltzmann equation based on certain
quasi-uniform grids in two and three dimensions. Adaptive finite element 
approximation through local refinement driven by an a posteriori error
estimate is shown to converge. The Poisson--Boltzmann equation does not 
appear to have been previously studied in detail theoretically, and it is 
hoped that this paper will help provide molecular modelers with a better 
foundation for their analytical and computational work with the 
Poisson--Boltzmann equation. Note that this article apparently gives 
the first rigorous convergence result for a numerical discretization 
technique for the nonlinear Poisson--Boltzmann equation with delta distribution 
sources, and it also introduces the first provably convergent adaptive method 
for the equation.  This last result is currently one of only a handful of 
existing convergence results of this type for nonlinear problems.
\end{abstract}

\maketitle


{\footnotesize
\tableofcontents
}
\vspace*{-0.5cm}

\section{Introduction}\label{sec1}
In this paper, we shall design and analyze finite element approximations 
of a widely used electrostatics model in the biomolecular modeling community, 
the nonlinear Poisson--Boltzmann equation (PBE):
\begin{equation}\label{basicPBE}
-\nabla \cdot (\varepsilon \nabla \tilde u) + \bar{\kappa} ^2\sinh
(\tilde u) = \sum _{i=1}^{N_m}q_i\delta _i \quad \hbox{in }\; \mathbb
R^{d}, \; d=2,3,
\end{equation}
where the dielectric $\varepsilon$ and the modified Debye--H\"uckel
parameter $\bar{\kappa}$ are piecewise constants in domains $\Omega_m$ 
(the domain for the biomolecule of interest) and $\Omega _s$ (the 
domain for a solvent surrounding the biomolecule), and 
$\delta_i:=\delta (x-x_i)$ is a Dirac distribution at point $x_i$. The
importance of (\ref{basicPBE}) in biomolecular modeling is
well-established; cf.~\cite{Briggs.J;McCammon.J1990,Sharp.K;Honig.B1990} 
for thorough discussions. Some analytical solutions are known, but only 
for unrealistic structure geometries, and usually only for linearizations 
of the equation; cf.~\cite{Holst.M1994} for a collection of these 
solutions and for references to the large amount of literature on
analytical solutions to the PBE and similar equations. The current 
technological advances are more demanding and require the solution of 
highly nonlinear problems in complicated geometries. To this end, 
numerical methods, including the finite element method, are widely 
used to solve the nonlinear PBE 
\cite{Holst.M1994,Holst.M;Said.F1995,Baker.N;Holst.M;Wang.F2000,Baker.N;Sept.D;Holst.M;McCammon.J2001,Shestakov.A;Milovich.J;Noy.A2002,Chen.W;Shen.Y;Xia.Q2005,Yang.Y2005}.

The main difficulties for the rigorous analysis and provably good 
numerical approximation of solutions to the nonlinear Poisson--Boltzmann 
equation include: (1) Dirac distribution sources, (2) exponential rapid
nonlinearities, and (3) discontinuous coefficients. We shall address
these difficulties in this paper. To deal with the $\delta$ distribution 
sources, we decompose $\tilde u$ as an unknown function in $H^1$ and
a known singular function, namely,
$$
\tilde u = u + G, \quad \hbox{ with }\; G=\sum _{i=1}^{N_m}G_i,
$$
where $G_i$ is the fundamental solution of $-\varepsilon _m\Delta 
G_i=q_i\delta _i$ in $\mathbb R^{d}$. Substituting this decomposition 
into the PBE, we then obtain the so-called regularized Poisson--Boltzmann
equation (RPBE):
\begin{eqnarray*}
-\nabla \cdot (\varepsilon \nabla u) + \bar{\kappa} ^2\sinh (u+G) =
\nabla \cdot ((\varepsilon -\varepsilon _m)\nabla G) \quad \hbox{in
}\; \mathbb R^{d}, \; d=2,3.
\end{eqnarray*}
The singularities of the $\delta$ distributions are transferred to 
$G$, which then exhibits degenerate behavior at each $\{x_i\}\subset 
\Omega _m$.  At those points, both $\sinh G(x_i)$ and $\nabla G(x_i)$ 
exhibit blowup. However, since $G$ is known analytically, one avoids 
having to build numerical approximations to $G$.  Moreover, both of 
the coefficients $\bar{\kappa}$ and $\varepsilon-\varepsilon _m$ are 
zero inside $\Omega _m$ where the blowup behavior arises. Due to this
cutoff nature of coefficients, we obtain a well-defined nonlinear
second-order elliptic equation for the regularized solution $u$ with
a source term in $H^{-1}$.  We will show that it also admits a unique
solution $u\in H^1$, even though the original solution $\tilde u\notin 
H^1$ due to the singularities present in $G$.\enlargethispage{-10pt}

Singular function expansions are a common technique in applied and 
computational mathematics for this type of singularity; this type of 
expansion has been previously proposed for the Poisson--Boltzmann 
equation in~\cite{Zhou.Z;Payne.P;Vasquez.M1996} and was shown
(empirically) to allow for more accurate finite difference 
approximations. In their work, the motivation for the technique was 
the poor discrete approximation of arbitrarily placed delta distributions 
using only the fixed corners of uniform finite difference meshes.  In 
the present work, our interest is in developing finite element methods 
using completely unstructured meshes, so we are able to place the 
delta distributions precisely where they should be and do not have this
problem with approximate delta function placement. Our motivation 
here for considering a singular function expansion is rather that the 
solution to the Poisson--Boltzmann equation is simply not smooth enough 
to either analyze or approximate using standard methods without using 
some sort of two-scale or multiscale expansion that represents the
nonsmooth part of the solution analytically. In fact, it will turn
out that expanding the solution into the sum of three functions, 
namely, a known singular function, an unknown solution to a linear
auxiliary problem, and an unknown solution to a second nonlinear 
auxiliary problem, is the key to establishing some fundamental results 
and estimates for the continuous problem and is also the key to
developing a complete approximation theory for the discrete problem as 
well as provably convergent nonadaptive and adaptive numerical
methods.

Starting with some basic results on existence, uniqueness, and  a
priori estimates for the continuous problem, we analyze the finite
element discretization and derive discrete analogues of the
continuous results to show that discretization leads to a well-posed 
discrete problem. Using maximum principles for the continuous and 
discrete problems, we derive a priori $L^{\infty}$-estimates for
the continuous and discrete solutions to control the nonlinearity, 
allowing us to obtain a priori error estimates for our finite
element approximation of the form 
$$
 \|u-u_h\|_{1}\lesssim \inf _{v_h\in V^h_D}\|u-v_h\|_{1},
$$
where $V^h_D$ is the linear finite element subspace defined over 
quasi-uniform triangulations with a certain boundary condition, and
$u_h$ is the finite element approximation of $u$ in $V^h_D$. The 
result is {\em quasi-optimal} in the sense it implies that the finite 
element approximation to the RPBE is within a constant of being the 
best approximation from the subspace $V^h_D$. After establishing these 
results for finite element approximations, we describe an adaptive 
approximation algorithm that uses mesh adaptation through local 
refinement driven by a posteriori error estimates. The adaptive
algorithm can be viewed as a mechanism for dealing with the primary 
remaining difficulty in the RPBE, namely, the discontinuities of the
coefficients across the interface between the solvent and the 
molecular regions. Finally, we shall prove that our adaptive finite 
element method will produce a sequence of approximations that 
converges to the solution of the continuous nonlinear PBE. This last 
result is one of only a handful of existing results of this type for 
nonlinear elliptic equations (the others 
being~\cite{Dorfler.W1995,Veeser.A2002,Carstensen.C2006}).\enlargethispage{-10pt}

The outline of this paper is as follows. In section \ref{sec2}, we give 
a brief derivation and overview of the Poisson--Boltzmann equation. In 
section \ref{sec3}, we derive a regularized form of the Poisson--Boltzmann 
equation by using a singular function expansion. In section \ref{sec4}, 
we give some basic existence and uniqueness results for the
RPBE. In section \ref{sec5}, we derive an a priori $L^{\infty}$-estimate
for the continuous problem. After introducing finite element methods for the
RPBE, in section \ref{sec6} we derive an analogous a priori
$L^{\infty}$-estimate for the discrete problem, and based on this we 
obtain a quasi-optimal a priori error estimate for the finite
element approximation. In section \ref{sec7}, we describe the adaptive 
algorithm, present an a posteriori error estimate, and prove a
general convergence result for the algorithm. In the last section, we 
summarize our work and give further remarks on the practical aspects 
using results in the present paper.

\section{The Poisson--Boltzmann equation}\label{sec2}
In this section we shall give a brief introduction to the nonlinear 
Poisson--Boltzmann equation. A detailed derivation can be found in 
\cite{Tanford.C1961,Holst.M1994}.

\looseness=-1The nonlinear PBE, a second-order nonlinear partial differential
equation, is fundamental to Debye--H\"uckel continuum electrostatic
theory~\cite{P.Debye;E.Huckel1923}. It determines a dimensionless 
potential around a charged biological structure immersed in a salt 
solution. The PBE arises from the Gauss law, represented mathematically by
the Poisson equation, which relates the electrostatic potential $\Phi$ 
in a dielectric to the charge density $\rho$:
$$
 -\nabla \cdot(\varepsilon \nabla \Phi ) = \rho,
$$
where $\varepsilon$ is the dielectric constant of the medium and here is
typically piecewise constant. Usually it jumps by one or two orders of 
magnitude at the interface between the charged structure (a biological 
molecular or membrane) and the solvent (a salt solution). The charge 
density $\rho$ consist of two components: $\rho =\rho _{\rm macro} + 
\rho _{\rm ion}.$ For the macromolecule, the charge density is a 
summation of $\delta$ distributions at $N_m$ point charges in the point
charge behavior, i.e.,
$$
\rho _{\rm macro}(x)=\sum _{i=1}^{N_m}q_i\delta (x-x_i),
\quad \; q_i= \frac{4\pi e_c^2}{\kappa_B T}z_i,
$$
where $\kappa _B>0$ is the Boltzmann constant, $T$ is the temperature, 
$e_c$ is the unit of charge, and $z_i$ is the amount of charge.

For the mobile ions in the solvent, the charge density $\rho _{\rm ion}$ 
cannot be given in a deterministic way. Instead it will be given by 
the Boltzmann distribution. If the solvent contains $N$ types of ions, 
of valence $Z_i$ and of bulk concentration $c_i$, then a Boltzmann
assumption about the equilibrium distribution of the ions leads to
$$
 \rho _{{\rm ion}}=\sum _{i=1}^{N} c_iZ_ie_c \exp\left(-Z_i \frac{e_c
 \Phi}{\kappa _B T}\right).
$$
For a symmetric $1:1$ electrolyte, $N=2$, $c_i=c_0$, and $Z_i=(-1)^i$, 
which yields
\begin{eqnarray*}
\rho _{\rm ion}= -2c_0 e_c \sinh \left(\frac {e_c\Phi}{\kappa _B T}\right).
\end{eqnarray*}

We can now write the PBE for modeling the electrostatic potential
of a solvated biological structure. Let us denote the molecule region 
by $\Omega _m \subset \mathbb R^d$ and consider the solvent region 
$\Omega _s=\mathbb R^d\backslash \bar \Omega _m$. We use $\tilde u$ to 
denote the dimensionless potential and $\bar{\kappa} ^2$ to denote the 
modified Debye--H\"uckel parameter (which is a function of the ionic
strength of the solvent). The nonlinear Poisson--Boltzmann equation is 
then
\begin{align}\label{PBE}
-\nabla \cdot (\varepsilon \nabla \tilde u)+\bar{\kappa} ^2\sinh
(\tilde u)&=\sum _{i=1}^{N_m}q_i\delta _i \quad \hbox{in }\mathbb
R^d,\\
\tilde u(\infty)&=0,\label{eq2.2}
\end{align}
where
$$
 \varepsilon =\left\{
\begin{array}{rrl}
\varepsilon _m & \hbox{ if } & x\in \Omega _m,\\ \varepsilon _s &
\hbox{ if } & x\in \Omega _s,
\end{array}
\right. \quad \hbox{ and }\quad \bar{\kappa} =\left\{
\begin{array}{rrl}
0 & \hbox{ if } & x\in \Omega _m,\\ \sqrt{\varepsilon_s} \kappa >0&
\hbox{ if } & x\in \Omega _s.
\end{array}
\right.
$$
It has been determined empirically that $\varepsilon _m\approx 2$ 
and $\varepsilon _s \approx 80$. The structure itself (e.g., a biological 
molecule or a membrane) is represented implicitly by $\varepsilon$
and $\bar{\kappa}$, as well as explicitly by the $N_m$ point charges 
$q_i = z_i e_c$ at the positions $x_i$. The charge positions are 
located in the strict interior of the molecular region $\Omega_m$. A 
physically reasonable mathematical assumption is that all charge 
locations obey the following lower bound on their distance to the 
solvent region $\Omega_s$ for some $\sigma > 0$:
\begin{equation}
\label{sigma}
|x - x_i| \ge \sigma ~~~ \forall x \in \Omega_s, ~~ i = 1,\ldots,N_m.
\end{equation}
In some models employing the PBE, there is a third region $\Omega _l$  
(the Stern layer~\cite{Borukhov.I;Andelman.D;Orland.H1997}), a layer  
between $\Omega _m$ and $\Omega _s$. In the presence of a Stern layer,  
the parameter $\sigma$ in~(\ref{sigma})
increases in value. Our
analysis and results can be easily generalized to this case as well. 

Some analytical solutions of the nonlinear PBE are known, but only for 
unrealistic structure geometries and usually only for linearizations
of the equation; cf.~\cite{Holst.M1994} for a collection of these 
solutions and for references to the large amount of literature on
analytical solutions to the PBE and similar equations. However, the
problem is highly nonlinear. Surface potentials of the linear and 
the nonlinear PBE differ by over an order of magnitude
\cite{Shestakov.A;Milovich.J;Noy.A2002}. Hence, using the nonlinear 
version of the PBE model is fundamentally important to accurately 
describe physical effects, and access to reliable and accurate 
numerical approximation techniques for the nonlinear PBE is critically 
important in this research area.

We finish this section by making some remarks about an alternative 
equivalent formulation of the PBE. It is well known
(cf.~\cite{Tanford.C1961,Holst.M1994}) that the PBE is formally 
equivalent to a coupling of two equations for the electrostatic 
potential in different regions $\Omega _m$ and $\Omega _s$ through the 
boundary interface.
This equivalence can be rigorously justified.
Inside $\Omega _m$, there are no ions. Thus 
the equation is simply the Poisson equation 
$$
 -\nabla \cdot (\varepsilon _m \nabla \tilde u) = \sum _{i=1}^{N_m}
q_i\delta _i \quad \hbox{ in } \Omega _m.
$$
In the solvent region $\Omega _s$, there are no atoms. Thus the 
density is given purely by the Boltzmann distribution 
$$
 -\nabla \cdot (\varepsilon _s \nabla \tilde u) + \bar \kappa ^2 \sinh (\tilde u)
= 0 \quad \hbox{ in } \Omega _s.
$$
These two equations are coupled together through the boundary 
conditions on the interface $\Gamma:=\partial \Omega _m=\partial 
\Omega _s \cap \Omega _m$: 
$$
[\tilde u] _{\Gamma}=0, \quad \text{ and }\quad
  \left [\varepsilon \frac{\partial \tilde u}{\partial
      n_{\Gamma}}\right ] _{\Gamma}=0,
$$
where $[f]|_{\Gamma}= \lim _{t\rightarrow 0}f(x+t 
n_{\Gamma})-f(x-tn_{\Gamma})$, with $n_{\Gamma}$ being the unit outward
normal direction of interface $\Gamma$. We will assume $\Gamma$ to be 
sufficiently smooth, say, of class $C^{2}$.

Solving the individual subdomain systems and coupling them through
the boundary, in the spirit of a nonoverlapping domain decomposition
method, is nontrivial due to the complicated boundary conditions and 
subdomain shapes. Approaches such as mortar-based finite element
methods to solve the coupled equations for linear or nonlinear PBE 
can be found in~\cite{Chen.W;Shen.Y;Xia.Q2005,Xie.D;Zhou.S2006}.

\section{Regularization of the continuous problem}\label{sec3}
In this section, we shall introduce a regularized version of the 
nonlinear PBE for both analysis and discretization purposes. We first 
transfer the original equation posed on the whole space to a truncated 
domain using an artificial boundary condition taken from an 
approximate analytical solution. Then we use the fundamental solution 
in the whole space to get rid of the singularities caused by $\delta$ 
distributions. We shall mainly focus on more difficult problems in 
three dimensions. Formulation and results in two dimensions are
similar and relatively easy.

Let $\Omega\subset \mathbb R^3$ with a convex and Lipschitz-continuous 
boundary $\partial \Omega$, and $\Omega_m \subset \Omega$. In the 
numerical simulation, for  simplicity, we usually choose $\Omega$
to be a ball or cube containing a molecule region. The solvent region is
chosen as $\Omega _s\cap \Omega$ and will be still denoted by $\Omega 
_s$. On $\partial \Omega$ we choose the boundary condition $\tilde 
u=g$, with
\begin{equation}\label{boundary}
g= \left( \frac{e_c^2}{k_B T} \right) \sum_{i=1}^{N_i}
\frac{e^{-\kappa|x-x_i|}}{\varepsilon_s|x-x_i|}.
\end{equation}
The boundary condition is usually taken to be induced by a known 
analytical solution to one of several possible simplifications of the
linearized PBE. Far from the molecule, such analytical solutions 
provide a highly accurate boundary condition approximation for the 
general nonlinear PBE on a truncation of $\mathbb R^3$. For example,
~(\ref{boundary}) arises from the use of the Green's function for the 
Helmholtz operator arising from linearizations of the 
Poisson--Boltzmann operator, where a single constant global dielectric 
value of $\varepsilon_s$ is used to generate the approximate boundary 
condition. (This is the case of a rod-like molecule approximation; 
cf.~\cite{Holst.M1994}.) Another approach to handling the boundary 
condition more accurately is to solve the PBE with boundary 
conditions such as (\ref{boundary}) on a large $\Omega$ (with a coarse
mesh) and then solve it in a smaller $\Omega$ (with a fine mesh) with 
the boundary condition provided by the earlier coarse mesh solution. The
theoretical justification of this approach can be found at 
\cite{Holst.M;Baker.N;Wang.F2000} using the two-grid theory 
\cite{Xu.J1996b}. We are not going to discuss more on the choice of 
the boundary condition in this paper.

Employing~(\ref{boundary}) we obtain the nonlinear PBE on a truncated 
domain:
\begin{align}\label{tPBE}
-\nabla \cdot (\varepsilon \nabla \tilde u)+\bar{\kappa} ^2\sinh
(\tilde u)&=\sum _{i=1}^{N_m}q_i\delta _i & \hbox{in
}\;\Omega,\\
\tilde u&=g & \hbox{on }\;\partial \Omega.\label{eq3.3}
\end{align}
This is, in most respects, a standard boundary-value problem for a 
nonlinear second-order elliptic partial differential equation. However,
the right side contains a linear combination of $\delta$ distributions,
which individually and together are not in $H^{-1}(\Omega)$; thus we 
cannot apply standard techniques such as classical potential 
theory. This has at times been the source of some confusion in the 
molecular modeling community, especially with respect to the design of 
convergent numerical methods. More precisely, we will see shortly 
that the solution to the nonlinear Poisson--Boltzmann equation is 
simply not globally smooth enough to expect standard numerical 
methods (currently used by most PBE simulators) to produce 
approximations that converge to the solution to the PBE in the limit 
of mesh refinement.

In order to gain a better understanding of the properties of solutions 
to the nonlinear PBE, primarily so that we can design new provably 
convergent numerical methods, we shall propose a decomposition of the 
solution to separate out the singularity caused by the $\delta$ 
distributions. This decomposition will turn out to be the key idea that 
will allow us to design discretization techniques for the nonlinear 
PBE which have provably good approximation properties and, based on
this, also design a new type of adaptive algorithm which is provably 
convergent for the nonlinear PBE.

We now give this decomposition. It is well known that the function
$$
G_i =\frac{q_i}{\varepsilon _m}\frac{1}{|x-x_i|}
$$
solves the equation
$$
 -\nabla \cdot (\varepsilon _m\nabla G_i) = q_i\delta _i \quad \hbox{
 in }\; \mathbb R^3.
$$
We thus decompose the unknown $\tilde u$ as an unknown smooth function
$u$ and a known singular function $G$:
$$
\tilde u = u + G,
$$
with
\begin{equation}
\label{G}
G=\sum _{i=1}^{N_m}G_i.
\end{equation}

Substituting the decomposition into (\ref{tPBE}), we then obtain
\begin{align}
\label{RPBE1}
-\nabla \cdot (\varepsilon \nabla u)+\bar{\kappa} ^2\sinh (u+G) &=
\nabla \cdot ((\varepsilon -\varepsilon _m)\nabla G) & \hbox{ in
}\; \Omega, \\
\label{RPBE2}
u&=g- G & \hbox{ on }\; \partial \Omega,
\end{align}
and call it the RPBE. The
singularities of the $\delta$ distribtuions are transferred to $G$, which 
then exhibits degenerate behavior at each $\{x_i\}\subset \Omega _m$. 
At those points, both $\sinh G(x_i)$ and $\nabla G(x_i)$ exhibit 
blowup. However, since $G$ is known analytically, one avoids having to 
build numerical approximations to $G$. Moreover, both of the 
coefficients $\bar{\kappa}$ and $\varepsilon-\varepsilon _m$ are zero 
inside $\Omega _m$, where the blowup behavior arises. Due to this
cutoff nature of coefficients, the RPBE is a mathematically
well defined nonlinear second-order elliptic equation for the
regularized solution $u$ with the source term in $H^{-1}$. We give a 
fairly standard argument in the next section to show that it also 
admits a unique solution $u\in H^1$, even though the original solution 
$\tilde u\notin H^1$ due to the singularities present in $G$. In the 
remainder of the paper we shift our focus to establishing additional 
estimates and developing an approximation theory to guide the design 
of convergent methods, both nonadaptive and adaptive.

Before moving on, it is useful to note that, away from $\{x_i\}$, the
function $G$ is smooth. In particular, we shall make use of the fact that
$G\in C^{\infty}(\Omega _s)\cap C^{\infty}(\Gamma)\cap 
C^{\infty}(\partial \Omega)$ in the later analysis. Also, a key 
technical tool will be a further decomposition of the regularized 
solution $u$ into linear and nonlinear parts, $u=u^l+u^n$, where $u^l$ 
satisfies
\begin{align}
\label{RPBE1_linear}
-\nabla \cdot (\varepsilon \nabla u^l) &= \nabla \cdot ((\varepsilon
-\varepsilon _m)\nabla G) & \hbox{ in } \Omega, \\
\label{RPBE2_linear}
u^l&=0 & \hbox{ on } \partial \Omega,
\end{align}
and where $u^n$ satisfies
\begin{align}
\label{RPBE1_nonlinear}
-\nabla \cdot (\varepsilon \nabla u^n)+\bar{\kappa} ^2\sinh
(u^n+u^l+G) &= 0 & \hbox{ in } \Omega, \\
\label{RPBE2_nonlinear}
u^n&=g- G & \hbox{ on } \partial \Omega.
\end{align}

\section{Existence and uniqueness}\label{sec4}
In this section we shall discuss the existence and uniqueness of the 
solution of the continuous RPBE. The arguments we use in this section 
appear essentially in~\cite{Holst.M1994}, except there the PBE was 
artificially regularized by replacing the delta distributions with 
$H^{-1}$-approximations directly rather than being regularized
through a singular function expansion.

We first write out the weak formulation. Since $\Delta G=0$ away 
from $\{x_i\}$, through integration by parts we get the weak 
formulation of RPBE: Find
$$
u\in M:=\{v\in H^1(\Omega) \, | \, e^v, e^{-v}\in L^2(\Omega _s),\;
\hbox{ and } \; v=g-G \;\hbox{ on } \; \partial \Omega\}
$$
such that
\begin{equation}\label{weak}
A(u, v)+(B(u),v)+\langle f_G,v\rangle =0 \quad \forall v\in
H_0^1(\Omega),
\end{equation}
where
\begin{itemize}
\item $A(u,v)=(\varepsilon \nabla u,\nabla u)$,
\item $(B(u),v)=({\bar \kappa} ^2\sinh(u+G),v)$, and
\item $\langle f_G,v\rangle= \int_{\Omega}(\varepsilon -\varepsilon_m)
\nabla G\cdot \nabla v.$
\end{itemize}

Let us define the energy on $M$:
$$
E(w)= \int _{\Omega}\frac{\varepsilon}{2}|\nabla w|^2+\bar{\kappa}
^2\cosh(w+G)+\langle f_G, w\rangle.
$$
It is easy to characterize the solution of (\ref{weak}) as the 
minimizer of the energy.

\begin{lemma}\label{lemma4.1}
If $u$ is the solution of the optimization problem, i.e.,
$$
E(u)=\inf _{w\in M}E(w),
$$
then $u$ is the solution of {\rm (\ref{weak})}.
\end{lemma}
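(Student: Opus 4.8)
The plan is to run the standard first-variation argument: differentiate the energy $E$ along admissible perturbations of the minimizer $u$ and read off (\ref{weak}) from the vanishing of the derivative. The only nonroutine point is that the admissible class $M$ carries the exponential integrability constraint $e^{\pm v}\in L^2(\Omega_s)$, so one must verify that the perturbed functions stay in $M$. To that end I would first restrict attention to test functions that are in addition bounded, $v\in H_0^1(\Omega)\cap L^\infty(\Omega)$, and note that then $u+tv\in M$ for every $t\in\mathbb R$: the boundary trace condition $u+tv=g-G$ on $\partial\Omega$ persists because $v$ vanishes there, and $e^{\pm(u+tv)}=e^{\pm u}\,e^{\pm tv}\in L^2(\Omega_s)$ since $e^{\pm u}\in L^2(\Omega_s)$ (as $u\in M$) and $e^{\pm tv}\in L^\infty(\Omega)$. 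Consequently $g(t):=E(u+tv)$ is a well-defined function of $t\in\mathbb R$ which, by hypothesis, attains its minimum at $t=0$.

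Next I would show $g$ is differentiable at $0$ and identify $g'(0)$. The Dirichlet part of $E$ is a quadratic polynomial in $t$ and the term $\langle f_G,\cdot\rangle$ is affine, so only the nonlinear term $\int_{\Omega_s}\bar{\kappa}^2\cosh(u+tv+G)$ needs care. Here I would differentiate under the integral sign, the $t$-derivative of the integrand being $v\sinh(u+tv+G)$; for $|t|\le 1$ this is dominated in absolute value by $\|v\|_{L^\infty}\,e^{\|v\|_{L^\infty}}\bigl(e^{u+G}+e^{-(u+G)}\bigr)$, which lies in $L^1(\Omega_s)$ because $\Omega_s$ is bounded, $G$ is bounded on $\Omega_s$ thanks to the separation hypothesis (\ref{sigma}), and $e^{\pm u}\in L^2(\Omega_s)\subset L^1(\Omega_s)$. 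The same estimate, with $tv$ in place of the derivative, also shows $g(t)$ is finite. Dominated convergence then yields $g'(0)=A(u,v)+(B(u),v)+\langle f_G,v\rangle$, and since $t=0$ is an interior minimum of $g$ this expression must vanish; thus (\ref{weak}) holds for all $v\in H_0^1(\Omega)\cap L^\infty(\Omega)$.

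Finally I would pass to general $v\in H_0^1(\Omega)$ by density, since $H_0^1(\Omega)\cap L^\infty(\Omega)$ is dense in $H_0^1(\Omega)$ (e.g.\ by truncation). This requires only that each term of (\ref{weak}) depend continuously on $v$ in the $H^1$-norm. This is immediate for $A(u,\cdot)$ and for $\langle f_G,\cdot\rangle$; for $(B(u),\cdot)=(\bar{\kappa}^2\sinh(u+G),\cdot)$ it follows once one observes that $\bar{\kappa}^2\sinh(u+G)\in L^2(\Omega)$ — it vanishes on $\Omega_m$, and on $\Omega_s$ it equals $\tfrac12\bar{\kappa}^2\bigl(e^{u+G}-e^{-(u+G)}\bigr)$ with both exponentials in $L^2(\Omega_s)$ as above — so that $|(B(u),v)|\le\|\bar{\kappa}^2\sinh(u+G)\|_{L^2(\Omega)}\,\|v\|_{L^2(\Omega)}$, which is controlled by $\|v\|_{H^1(\Omega)}$. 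Passing to the limit in (\ref{weak}) then gives the identity for all $v\in H_0^1(\Omega)$, which is the assertion.

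I expect the main obstacle to be precisely the legitimacy of differentiating the $\cosh$ term under the integral sign while keeping the perturbation inside $M$; this is the reason for first working with bounded test functions and only afterward taking the $H^1$-closure, and it is where the geometric assumption (\ref{sigma}) (through boundedness of $G$ on $\Omega_s$) and the precise definition of $M$ enter in an essential way. I would also note in passing that the converse implication — a solution of (\ref{weak}) is a minimizer — is the genuinely variational direction and would rely on the convexity of $E$ on $M$, but it is not needed for the present statement.
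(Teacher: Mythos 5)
Your proof is correct and uses the same first-variation argument as the paper's one-line proof ($F(t)=E(u+tv)$ minimized at $t=0$ gives $F'(0)=0$). You simply fill in the technical points the paper leaves implicit — working first with bounded test functions so that $u+tv$ stays in $M$, justifying differentiation under the integral via dominated convergence using the boundedness of $G$ on $\Omega_s$, and then passing to general $v\in H_0^1(\Omega)$ by density — which is a welcome level of rigor but not a different approach.
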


\begin{proof}
For any $v\in H_0^1(\Omega)$ and any $t\in \mathbb R$, the function 
$F(t)=E(u+tv)$ attains the minimal point at $t=0$, and thus $F'(0)=0$,
which gives the desired result.%
\qquad\end{proof}

We now recall some standard variational analysis on the existences of 
the minimizer. In what follows we suppose $S$ is a set in some Banach
space $V$ with norm $\|\cdot\|$, and $J(u)$ is a functional defined on 
$S$. $S$ is called {\it weakly sequential compact} if, for any sequence
$\{u_k\}\subset S$, there exists a subsequence $\{u_{k_i}\}$ such that 
$u_{k_i}\rightharpoonup u\in S$, where $\rightharpoonup$ stands for 
the convergence in the weak topology. For any $u_k\rightharpoonup u$, 
if $J(u_k)\rightarrow J(u)$, we say $J$ is {\it weakly continuous} at 
$u$; if
$$
J(u)\leq \liminf _{k\rightarrow \infty}J(u_k),
$$
we say $J$ is {\it weakly lower semicontinuous} (w.l.s.c.) at
$u$. The following theorem can be proved by the definition easily. 

\begin{theorem}\label{theorem4.2}
If
\begin{enumerate}
\item[{\rm 1.}] $S$ is weakly sequential compact, and 
\item[{\rm 2.}] $J$ is weakly lower semicontinuous on $S$,
\end{enumerate}
then there exists $u\in S$ such that
$$
J(u)=\inf _{w\in S}J(w).
$$
\end{theorem}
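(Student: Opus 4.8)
The plan is to apply the classical direct method of the calculus of variations, using each of the two hypotheses exactly once. First I would set $m = \inf_{w \in S} J(w)$, which a priori lies in $[-\infty, +\infty)$, and select a minimizing sequence $\{u_k\} \subset S$, i.e., a sequence with $J(u_k) \to m$ as $k \to \infty$. Such a sequence exists directly from the definition of infimum: when $m$ is finite one picks $u_k \in S$ with $J(u_k) \le m + 1/k$, and when $m = -\infty$ one picks $u_k \in S$ with $J(u_k) \le -k$.

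Next I would invoke hypothesis 1. Since $S$ is weakly sequentially compact, the minimizing sequence admits a subsequence $\{u_{k_i}\}$ and a point $u \in S$ with $u_{k_i} \rightharpoonup u$. Because $\{J(u_{k_i})\}$ is a subsequence of the convergent sequence of reals $\{J(u_k)\}$, we still have $J(u_{k_i}) \to m$, and hence $\liminf_{i \to \infty} J(u_{k_i}) = m$.

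Finally I would invoke hypothesis 2. Weak lower semicontinuity of $J$ at the point $u$ gives $J(u) \le \liminf_{i \to \infty} J(u_{k_i}) = m$. On the other hand, $u \in S$, so $J(u) \ge \inf_{w \in S} J(w) = m$ by the definition of the infimum. Combining the two inequalities yields $J(u) = m$, so $u$ is the desired minimizer; in particular $m$ equals the real number $J(u)$, so the case $m = -\infty$ cannot in fact occur.

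I do not anticipate any genuine obstacle: the argument is the textbook direct method, and each step rests on one hypothesis together with elementary facts about infima and convergent sequences. The only point worth making explicit is that finiteness of the infimum is an output of the proof rather than an assumption — it falls out of the final inequality chain once $u \in S$ is known and $J$ is taken to be real-valued on $S$.
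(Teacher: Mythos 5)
Your proof is correct and is exactly the standard direct-method argument the paper has in mind; the paper itself states only that the result ``can be proved by the definition easily'' and omits the proof, and your write-out is the natural realization of that remark.
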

\unskip

We shall give conditions for the weakly sequential compactness and 
weakly lower semicontinuity. First we use the fact that a bounded set
in a reflexive Banach space is weakly sequential compact. 

\begin{lemma}\label{lemma4.3}
One has the following results:
\begin{enumerate}
\item[{\rm 1.}] The closed unit ball in a reflexive Banach space $V$ 
is weakly sequential compact.
\item[{\rm 2.}] If $\lim _{\|v\|\rightarrow \infty}J(v)=\infty$, then 
$$
\inf_{w\in V} J(w)=\inf _{w\in S} J(w).
$$
\end{enumerate}
\end{lemma}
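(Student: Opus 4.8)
The plan is to prove the two statements separately, using only elementary functional analysis.

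Statement 1 is a standard consequence of reflexivity, and I would simply invoke it as a known fact. Concretely, the Eberlein--Šmulian theorem asserts that in a reflexive Banach space $V$, every bounded sequence has a weakly convergent subsequence; restricting to the closed unit ball $B = \{v \in V : \|v\| \leq 1\}$, any sequence $\{u_k\} \subset B$ is bounded by $1$, hence has a subsequence $u_{k_i} \rightharpoonup u$ for some $u \in V$. It remains to check $u \in B$, which follows from the weak lower semicontinuity of the norm: $\|u\| \leq \liminf_{i \to \infty} \|u_{k_i}\| \leq 1$. (The norm is weakly lsc because $\|u\| = \sup_{\|\varphi\|_{V^*} \leq 1} |\langle \varphi, u\rangle|$, a supremum of weakly continuous functionals.) Thus $u \in B$, proving weak sequential compactness of $B$.

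For Statement 2, the inequality $\inf_{w \in V} J(w) \leq \inf_{w \in S} J(w)$ is trivial since $S \subset V$. For the reverse inequality, suppose first that $\inf_{w \in V} J(w) = -\infty$; then we must show this is impossible under the coercivity hypothesis, or rather that $S$ already captures the (finite) infimum. The clean way: let $m := \inf_{w \in V} J(w)$ and take a minimizing sequence $\{w_k\} \subset V$ with $J(w_k) \to m$. By coercivity, $\lim_{\|v\| \to \infty} J(v) = \infty$, so $\{w_k\}$ must be bounded — otherwise a subsequence would have $\|w_{k_j}\| \to \infty$, forcing $J(w_{k_j}) \to \infty$, contradicting $J(w_k) \to m$ (and in particular this shows $m > -\infty$). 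Hence there is $R > 0$ with $w_k \in S := \{v \in V : \|v\| \leq R\}$ for all $k$, so $\inf_{w \in S} J(w) \leq J(w_k) \to m = \inf_{w \in V} J(w)$. Combining the two inequalities gives equality.

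The only mild subtlety — and the one place to be careful — is the logical structure of Statement 2: one must extract boundedness of the minimizing sequence \emph{before} knowing the infimum is finite, which is exactly what the argument above does by noting that an unbounded minimizing sequence would force $J \to +\infty$ along it. I expect no real obstacle here; the lemma is a routine packaging of the direct method of the calculus of variations, and its role in the paper is to feed Theorem 4.2 so that, once coercivity and weak lower semicontinuity of $E$ are verified for the RPBE energy, the existence of a minimizer over $M$ follows immediately.
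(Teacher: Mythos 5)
The paper does not actually give a proof of Lemma~\ref{lemma4.3}: it is stated as a known fact from elementary functional analysis and variational calculus, with no accompanying \texttt{proof} environment, and the paper moves on to Lemma~\ref{lemma4.4} (where it cites Yosida for the lower semicontinuity result). So there is no ``paper's own proof'' to compare against here.

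Your proposal is correct, and it fills in exactly the standard details one would expect. For part~1, the combination of Kakutani's characterization of reflexivity (closed unit ball weakly compact) with Eberlein--\v{S}mulian (weak compactness equals weak sequential compactness in Banach spaces), followed by weak lower semicontinuity of the norm to keep the weak limit inside the ball, is the canonical argument; your attribution of the whole package to Eberlein--\v{S}mulian is slightly loose but substantively fine. For part~2, your reading of the statement (that $S$ is a sufficiently large closed ball) is the only one that makes the equality true, and is clearly what the authors intend given the role of the lemma in Theorem~\ref{theorem4.5}. The logical structure you flag --- extracting boundedness of the minimizing sequence from coercivity \emph{before} knowing $\inf_V J > -\infty$, with finiteness of the infimum coming out as a byproduct --- is exactly the right point to be careful about, and you handle it correctly: an unbounded subsequence would force $J \to +\infty$ along it, contradicting $J(w_k) \to m$ whether $m$ is finite or $-\infty$.
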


The next lemma concerns when the functional is w.l.s.c. The proof
can be found at~\cite{Yosida.K1980}.

\begin{lemma}\label{lemma4.4}
If $J$ is a convex functional on a convex set $S$ and $J$ is G\^ateaux
differentiable, then $J$ is w.l.s.c.\ on $S$.
\end{lemma}

Now we are in the position to establish the existence and uniqueness 
of solutions to the RPBE.

\begin{theorem}\label{theorem4.5}
There exists a unique $u\in M\subset H^1(\Omega)$ such that
$$
E(u)=\inf _{w\in M}E(w).
$$
\end{theorem}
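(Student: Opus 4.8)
The plan is to apply the variational framework just set up (Theorem~\ref{theorem4.2} together with Lemmas~\ref{lemma4.3} and~\ref{lemma4.4}) to the energy functional $E$ on the admissible set $M$, and then to argue uniqueness separately via strict convexity. First I would verify that $M$ is nonempty and convex: nonemptiness follows because $g - G \in C^\infty(\partial\Omega)$ (recall $G$ is smooth away from the charges, which lie in the interior of $\Omega_m$), so one can pick a smooth bounded extension $w_0$ into $\Omega$, and for such a bounded function $e^{w_0}, e^{-w_0} \in L^\infty(\Omega_s) \subset L^2(\Omega_s)$; convexity of $M$ is immediate since the constraint $v = g - G$ on $\partial\Omega$ is affine and the conditions $e^v, e^{-v} \in L^2(\Omega_s)$ are preserved under convex combinations because $t \mapsto e^{tv_1 + (1-t)v_2}$ is dominated by $\max(e^{v_1}, e^{v_2})$ pointwise (convexity of the exponential).

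Next I would show $E$ is convex and Gateaux differentiable on $M$. The Dirichlet term $\int_\Omega \frac{\varepsilon}{2}|\nabla w|^2$ is convex (indeed strictly convex modulo constants) since $\varepsilon \geq \varepsilon_m > 0$; the term $\bar\kappa^2 \cosh(w+G)$ is convex because $\cosh$ is convex and $\bar\kappa^2 \geq 0$; and $\langle f_G, w\rangle$ is linear, hence convex. Gateaux differentiability is the content of the computation in Lemma~\ref{lemma4.1}: for $w \in M$ and $v \in H_0^1(\Omega)$, $\frac{d}{dt}E(w+tv)\big|_{t=0} = A(w,v) + (B(w),v) + \langle f_G, v\rangle$, where one must check the difference quotient for the $\cosh$ term converges — this uses the admissibility $e^{w+G}, e^{-(w+G)} \in L^1(\Omega_s)$ to dominate and pass the limit inside the integral. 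Then Lemma~\ref{lemma4.4} gives that $E$ is weakly lower semicontinuous on $M$.

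For the compactness hypothesis of Theorem~\ref{theorem4.2}, I would establish coercivity: $E(w) \to \infty$ as $\|w\|_{H^1(\Omega)} \to \infty$ over $w \in M$. Writing $w = w_0 + \phi$ with $\phi \in H_0^1(\Omega)$, the Dirichlet term controls $\|\nabla\phi\|_{L^2}^2$ from below (Poincar\'e on $H_0^1$), the $\cosh$ term is nonnegative, and $|\langle f_G, w\rangle| \leq \|(\varepsilon-\varepsilon_m)\nabla G\|_{L^2(\Omega_s)}\|\nabla w\|_{L^2}$ — here the cutoff is essential: $\varepsilon - \varepsilon_m$ vanishes on $\Omega_m$, so the integral is over $\Omega_s$ where $\nabla G$ is smooth and bounded (using \eqref{sigma}), hence $f_G \in H^{-1}(\Omega)$ with a finite norm. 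A Young's inequality then absorbs the linear term and yields $E(w) \gtrsim \|\nabla\phi\|_{L^2}^2 - C \gtrsim \|w\|_{H^1}^2 - C'$, giving coercivity; combined with Lemma~\ref{lemma4.3}(1)--(2) this yields a minimizer $u \in M$. Finally, uniqueness follows from strict convexity of $E$ modulo the observation that two minimizers $u_1, u_2$ must have $\nabla(u_1 - u_2) = 0$ (from strict convexity of the Dirichlet term) and $u_1 - u_2 \in H_0^1(\Omega)$, forcing $u_1 = u_2$.

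The main obstacle, I expect, is the rigorous handling of the $\cosh$ nonlinearity at the boundary between "finite energy" and "infinite energy" configurations — specifically, making the Gateaux differentiability computation and the w.l.s.c.\ argument fully rigorous given that $E(w)$ may be $+\infty$ for some $w \in H^1(\Omega)$ satisfying the boundary condition but not the integrability constraints, and ensuring the minimizing sequence stays within a set where the exponential integrability is uniformly controlled. The point $G$ itself blows up at the $x_i$, but since $\bar\kappa = 0$ on $\Omega_m$ the $\cosh(w+G)$ term only sees $\Omega_s$, where $G$ is smooth, so this particular singularity does no harm — this is exactly the payoff of the regularization, and it should be emphasized in the argument.
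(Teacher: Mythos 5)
Your proposal is correct and follows essentially the same route as the paper: verify convexity of $M$, convexity and Gateaux differentiability of $E$, and coercivity via a Young's-inequality absorption of $\langle f_G, \cdot\rangle$ (exploiting that $\varepsilon-\varepsilon_m$ vanishes on $\Omega_m$ so $f_G$ is controlled by $\nabla G$ on $\Omega_s$ alone), then invoke the abstract existence theorem and conclude uniqueness from strict convexity. The additional details you supply (nonemptiness of $M$, the pointwise domination argument for convexity of $M$, the explicit $w = w_0 + \phi$ decomposition for Poincar\'e, and the remark that $\nabla(u_1-u_2)=0$ with $u_1-u_2\in H_0^1$ forces equality) are all consistent elaborations of steps the paper compresses or skips, and your flag about the exponential integrability of the minimizing sequence is a genuine subtlety that the paper also glosses over rather than a defect of your argument.
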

\unskip

\begin{proof}
It is easy to see $E(w)$ is differentiable in $M$ with
$$
\langle DE(u),v\rangle = A(u, v)+(B(u),v)+\langle f_G,v\rangle.
$$
To prove the existence of the minimizer, we need only to verify that
\begin{enumerate}
\item $M$ is a convex set,
\item $E$ is convex on $M$, and
\item $\lim _{\|v\|_1\rightarrow \infty}E(v)=\infty$.
\end{enumerate}
The verification of (1) is easy and thus skipped here. (2) comes from 
the convexity of functions $x^2$ and $\cosh (x)$. Indeed $E$ is {\em 
strictly} convex. (3) is a consequence of the inequality
\begin{equation}\label{corcevity}
E(v)\geq C(\varepsilon,\bar \kappa) \|v\|^2_1+ C(G,g),
\end{equation}
which can be proved as following.  First, by Young's inequality we have for
any $\delta > 0$
$$
\langle f_G, v\rangle \leq \varepsilon _s\|\nabla G\|_{\Omega_s}
\|\nabla v\|_{\Omega _s}\leq \frac{1}{\delta} \|\nabla G\|^2_{\Omega _s} +
 \delta \varepsilon^2_s\| \nabla_v\|^2_{\Omega _s}.
$$
Since $\cosh(x) \geq 0$, we have then $E(v)\geq C(\varepsilon,\bar \kappa)\|\nabla
v\|^2- (1/\delta)\|\nabla G\|^2_{\Omega _s}$, where we can
ensure $C(\varepsilon, \bar \kappa) > 0$ if $\delta$ is chosen to be
sufficiently small.  Then using norm equivalence on
$M$, we get (\ref{corcevity}). The uniqueness of the minimizer comes
from the strict convexity of $E$.%
\qquad\end{proof}

\section{Continuous a priori $L^{\infty}$-estimates}\label{sec5}
In this section, we shall derive a priori $L^{\infty}$-estimates
of the solution of the RPBE. The main result of this section is the 
following theorem.

\begin{theorem}\label{th:max}
Let $u$ be the weak solution of RPBE in $H^1(\Omega)$. Then $u$ is 
also in $L^{\infty}(\Omega)$.
\end{theorem}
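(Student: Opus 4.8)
The plan is to establish $L^\infty$ bounds by combining the decomposition $u = u^l + u^n$ with separate treatments of the two pieces, exploiting the fact that the nonlinearity has a sign that cooperates with the maximum principle. First I would handle $u^l$, the solution of the linear problem \eqref{RPBE1_linear}--\eqref{RPBE2_linear}. Since the source is $\nabla \cdot ((\varepsilon - \varepsilon_m)\nabla G)$ with $\varepsilon - \varepsilon_m$ supported in $\overline{\Omega_s}$, and $G \in C^\infty(\Omega_s) \cap C^\infty(\Gamma) \cap C^\infty(\partial\Omega)$ (in particular $\nabla G$ is bounded on $\Omega_s$ by the separation hypothesis \eqref{sigma}), the datum $f_G$ is actually an $L^\infty$ (indeed $L^p$ for all $p$) divergence-form right-hand side. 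Standard De Giorgi--Nash--Moser / Stampacchia $L^\infty$-estimates for linear divergence-form elliptic equations with bounded measurable coefficients and bounded boundary data $u^l = 0$ then give $u^l \in L^\infty(\Omega)$, with an explicit bound in terms of $\|\nabla G\|_{L^\infty(\Omega_s)}$, $\varepsilon_s - \varepsilon_m$, $\varepsilon_m$, and $|\Omega|$.

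Next I would bound $u^n$ solving \eqref{RPBE1_nonlinear}--\eqref{RPBE2_nonlinear}. The key observation is that $x \mapsto \sinh(x)$ is monotone increasing, so the nonlinear term acts with the right sign for a Stampacchia truncation argument. Set $M_+ := \max\{ \|g - G\|_{L^\infty(\partial\Omega)},\, 0\}$ and test the weak form of \eqref{RPBE1_nonlinear} with $v = (u^n - M_+ - \|u^l\|_\infty - \|G\|_{L^\infty(\Omega_s)})^+ \in H_0^1(\Omega)$ (note this test function vanishes near the charge points $x_i$, so all quantities involving $G$ are evaluated where $G$ is smooth and bounded, and it vanishes on $\partial\Omega$ because of the boundary condition). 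On the set where $v > 0$ one has $u^n + u^l + G \ge M_+ \ge 0$, hence $\sinh(u^n + u^l + G) \ge 0$, and since $\bar\kappa^2 \ge 0$ the nonlinear contribution $(\bar\kappa^2 \sinh(u^n+u^l+G), v) \ge 0$. Coercivity of $A(\cdot,\cdot)$ then forces $\|\nabla v\|_{L^2(\Omega)}^2 \le 0$, so $v \equiv 0$, giving an upper bound $u^n \le M_+ + \|u^l\|_\infty + \|G\|_{L^\infty(\Omega_s)}$ a.e. The lower bound follows symmetrically using $v = (u^n + M_+ + \|u^l\|_\infty + \|G\|_{L^\infty(\Omega_s)})^-$ and the oddness of $\sinh$. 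Adding the bounds on $u^l$ and $u^n$ yields $u = u^l + u^n \in L^\infty(\Omega)$.

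The main obstacle, and the point needing the most care, is the interaction between $G$'s singularities at $\{x_i\}$ and the truncation test functions: one must verify that the chosen $v$ genuinely lies in $H_0^1(\Omega)$ and that the cutoff of the coefficients $\bar\kappa^2$ and $\varepsilon - \varepsilon_m$ inside $\Omega_m$ really does confine every $G$-dependent term to the region $\Omega_s \cup \partial\Omega$ where $G$ is smooth. Concretely, $v = (u^n - c)^+$ for a constant $c$ larger than $\sup_{\partial\Omega}(g-G) + \|u^l\|_\infty + \sup_{\Omega_s}|G|$ is automatically supported away from the $x_i$ only if $u^n$ itself stays below $c$ near those points; this is handled by first showing (via the already-established $H^1$ regularity and an interior argument, or by absorbing $G$'s negative unboundedness near $x_i$ into the sign of $\sinh$) that the blow-up of $G$ is harmless because $\bar\kappa^2 \equiv 0$ there. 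An alternative that sidesteps this entirely is to note $\tilde u = u + G$ and work with $w := u^n$ on $\Omega_s$ only, using the interface formulation from Section~\ref{sec2}; but the truncation argument above, carried out carefully with the observation that $\bar\kappa^2\sinh(u^n+u^l+G)$ is supported in $\overline{\Omega_s}$, is cleaner. Once the admissibility of the test functions is justified, the remaining estimates are the routine Stampacchia/Moser computations, which I would not reproduce in detail.
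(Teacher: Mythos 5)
Your proof is correct and follows the same overall strategy as the paper --- split $u=u^l+u^n$, bound the linear piece, then bound the nonlinear piece via a truncation (cutoff) argument using the sign of $\sinh$ and coercivity --- but the two halves differ in detail. For $u^l$, the paper first integrates by parts to rewrite $\langle f_G,v\rangle=([\varepsilon]\,\partial G/\partial n_\Gamma,\,v)_\Gamma$ and then invokes $H^2$ regularity for the elliptic interface problem together with Sobolev embedding; you instead note that $f_G=\nabla\cdot F$ with $F=(\varepsilon-\varepsilon_m)\nabla G\in L^\infty(\Omega)$ (zero in $\Omega_m$, smooth on $\Omega_s$) and apply De Giorgi--Nash--Moser/Stampacchia. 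Your route is more elementary and avoids the interface-regularity machinery, but the paper's argument buys piecewise $H^2$ regularity which it reuses later (e.g.\ in Theorem~\ref{theorem6.3}), so the choice is not a free one for the article as a whole. For $u^n$, your test function $(u^n-c)^+$ with $c=M_++\|u^l\|_\infty+\|G\|_{L^\infty(\Omega_s)}$ is a coarser choice than the paper's $\beta$, but works the same way.

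One point where your write-up overcomplicates matters: you identify ``the interaction between $G$'s singularities at $\{x_i\}$ and the truncation test functions'' as the main obstacle and propose verifying that $v$ is supported away from the $x_i$. This is not actually required. The test function $(u^n-c)^+$ lies in $H_0^1(\Omega)$ as soon as $u^n\in H^1(\Omega)$ and $u^n\le c$ on $\partial\Omega$ in the trace sense; its support near the charge points is irrelevant. The reason the singular behavior of $G$ never enters is simply that $\bar\kappa^2\equiv 0$ on $\Omega_m$, so the pairing $(\bar\kappa^2\sinh(u^n+u^l+G),v)$ is an integral over $\Omega_s$ only, where $G$ is smooth and bounded, regardless of what $v$ does on $\Omega_m$. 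The paper's proof of Lemma~\ref{lm:nonlinear} handles this implicitly and cleanly; your sentence claiming the test function ``vanishes near the charge points $x_i$'' is neither obviously true nor needed, and you should simply delete that concern rather than try to patch it with an ``interior argument.''
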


Note that we cannot apply the analysis of 
\cite{Jerome.J1985,Kerkhoven.T;Jerome.J1990} directly to the RPBE,
since the right side $f_G\in H^{-1}(\Omega)$ and does not lie in
$L^{\infty}(\Omega)$ as required for use of these results. We shall 
overcome this difficulty through further decomposition of $u$ into 
linear and nonlinear parts.

Let $u=u^l+u^n$, where $u^l\in H_0^1(\Omega)$ satisfies the linear 
elliptic equation (the weak form of 
(\ref{RPBE1_linear})--(\ref{RPBE2_linear}))
\begin{equation}\label{linear}
A(u^l,v)+\langle f_G, v\rangle=0 \quad \forall v\in H_0^1(\Omega)
\end{equation}
and where $u^n\in M$ satisfies the nonlinear elliptic equation (the
weak form of (\ref{RPBE1_nonlinear})--(\ref{RPBE2_nonlinear}))
\begin{equation}\label{nonlinear}
A(u^n,v)+ (B(u^n+u^l), v) = 0 \quad \forall v\in H_0^1(\Omega).
\end{equation}

Theorem \ref{th:max} then follows from the estimates of $u^l$ and 
$u^n$ in Lemmas \ref{lm:ul} and \ref{lm:nonlinear}; cf.\ (\ref{eq:ul})
and (\ref{eq:un}).

\begin{lemma}\label{lm:ul}
Let $u^l$ be the weak solution of {\rm (\ref{linear})}. Then
\begin{equation}
\label{eq:ul}
u^l\in L^{\infty}(\Omega).
\end{equation}
\end{lemma}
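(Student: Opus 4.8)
The plan is to show that $u^l$, the weak solution of the linear problem \eqref{linear}, lies in $L^\infty(\Omega)$ by exploiting the special structure of the right-hand side $f_G$, which, although only in $H^{-1}(\Omega)$, comes from the manifestly smooth vector field $(\varepsilon-\varepsilon_m)\nabla G$. The key observation is that this vector field \emph{vanishes identically on $\Omega_m$} (since $\varepsilon = \varepsilon_m$ there) and is $C^\infty$ on $\overline{\Omega_s}$, because $G$ is smooth away from the charge locations $\{x_i\}$ and the charges are separated from $\Omega_s$ by the distance $\sigma$ in \eqref{sigma}. So the datum is as regular as one could hope for away from the molecular region, and the only obstruction to boundedness is the jump in $\varepsilon$ across the interface $\Gamma$.

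First I would rewrite \eqref{linear} in divergence form: $u^l$ solves $-\nabla\cdot(\varepsilon\nabla u^l) = \nabla\cdot((\varepsilon-\varepsilon_m)\nabla G) =: \nabla\cdot \mathbf{F}$ in $\Omega$ with $u^l = 0$ on $\partial\Omega$, where $\mathbf{F}\in L^\infty(\Omega)^d$ (indeed $\mathbf{F}\in C^\infty(\overline{\Omega_s})^d$ and $\mathbf{F}\equiv 0$ on $\Omega_m$). Then I would invoke the standard De Giorgi--Stampacchia $L^\infty$ bound for linear divergence-form elliptic equations with bounded measurable coefficients and right-hand side of the form $\nabla\cdot\mathbf{F}$ with $\mathbf{F}\in L^q(\Omega)^d$ for some $q>d$ (here $q=\infty$ works), together with the zero Dirichlet boundary data. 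This yields $\|u^l\|_{L^\infty(\Omega)} \le C(d,\Omega,\varepsilon_m,\varepsilon_s)\,\|\mathbf{F}\|_{L^\infty(\Omega)}$, which is exactly \eqref{eq:ul}. The boundedness and uniform ellipticity of $\varepsilon$ (it takes only the two positive values $\varepsilon_m,\varepsilon_s$) are all that the maximal-regularity-free $L^\infty$ estimate requires; one does \emph{not} need any regularity of the coefficient across $\Gamma$.

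Alternatively, if one prefers to stay closer to the tools already used in the paper, one can argue by truncation directly: for $k>0$ test \eqref{linear} with $v = (u^l - k)^+ \in H^1_0(\Omega)$ (which is admissible once $k \ge 0$, since $u^l$ vanishes on $\partial\Omega$), use ellipticity of $\varepsilon$ on the left, bound the right-hand side $\int_{A_k}\mathbf{F}\cdot\nabla(u^l-k)^+$ where $A_k = \{u^l>k\}$ by Cauchy--Schwarz and Young, absorb the gradient term, and then apply the Sobolev inequality together with the Stampacchia iteration lemma on the decreasing family of super-level sets $A_k$ to conclude $|A_k| = 0$ for $k$ large; the same argument applied to $-u^l$ controls the negative part. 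This reproduces the De Giorgi bound by hand.

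The main obstacle I anticipate is purely bookkeeping rather than conceptual: one must verify carefully that $\mathbf{F} = (\varepsilon-\varepsilon_m)\nabla G$ is genuinely in $L^\infty(\Omega)^d$, which hinges on the geometric separation hypothesis \eqref{sigma} --- on $\Omega_s$ we have $|x-x_i|\ge\sigma$ so $|\nabla G_i| = |q_i|/(\varepsilon_m |x-x_i|^2) \le |q_i|/(\varepsilon_m\sigma^2)$, hence $\mathbf{F}$ is bounded on $\Omega_s$, and $\mathbf{F}\equiv 0$ on $\Omega_m$ so there is nothing to check there (in particular the singularities of $G$ at the $x_i$ are harmless). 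With that in hand, the $L^\infty$ estimate is a black-box application of classical linear theory, and no use of the nonlinearity is needed --- that is deferred to the companion estimate for $u^n$ in Lemma \ref{lm:nonlinear}.
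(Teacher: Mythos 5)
Your proof is correct, and it takes a genuinely different route from the paper's. The paper exploits $\Delta G = 0$ away from $\{x_i\}$ to integrate by parts a second time, turning $\langle f_G, v\rangle$ into the interface term $\left([\varepsilon]\frac{\partial G}{\partial n_{\Gamma}}, v\right)_{\Gamma}$; it then reads $(\ref{linear})$ as an elliptic \emph{interface} problem with a $C^{\infty}$ jump in the conormal derivative across the $C^2$ surface $\Gamma$, appeals to interface-regularity theory (Babu\v{s}ka, Bramble--King, Chen--Zou, Savar\'e) to obtain $u^l\in H^2(\Omega_m)\cap H^2(\Omega_s)\cap H^1_0(\Omega)$, and concludes $L^{\infty}$ boundedness by Sobolev embedding. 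You instead keep the problem in divergence form $-\nabla\cdot(\varepsilon\nabla u^l)=\nabla\cdot\mathbf{F}$ with $\mathbf{F}=(\varepsilon-\varepsilon_m)\nabla G$, verify $\mathbf{F}\in L^{\infty}(\Omega)^d$ directly from the separation bound $(\ref{sigma})$ (so that $|\nabla G_i|\lesssim \sigma^{-2}$ on $\Omega_s$ while $\mathbf{F}\equiv 0$ on $\Omega_m$), and invoke the De~Giorgi--Stampacchia $L^{\infty}$ bound for divergence-form equations with bounded measurable, uniformly elliptic coefficients and data $\nabla\cdot\mathbf{F}$, $\mathbf{F}\in L^q$, $q>d$ --- or, as you sketch, reproduce it by hand via truncation with $(u^l-k)^{\pm}$ and Stampacchia iteration. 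Your route is more elementary and more robust: it needs no smoothness of $\Gamma$ at all, only boundedness and ellipticity of $\varepsilon$, whereas the paper's argument leans on $\Gamma\in C^2$ and the interface-regularity literature. What the paper's heavier machinery buys, however, is extra regularity information that is reused downstream: in the proof of Theorem~\ref{theorem6.3} the Besov regularity $u^l\in B^{3/2}_{2,\infty}(\Omega)$ from Savar\'e's interface result is used to get $\|u^l-u^l_h\|_{\infty}\lesssim h^s$ on quasi-uniform grids and hence uniform boundedness of the discrete $u^l_h$. Your argument establishes the lemma as stated cleanly, but by itself does not supply that later piecewise-$H^2$/Besov information.
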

\unskip

\begin{proof}
Since $\Delta G=0$ in $\Omega _s$, using integral by parts we 
can rewrite the functional $f_G$ as
$$
\langle f_G, v\rangle = ((\varepsilon -\varepsilon _m)\nabla G,
\nabla v) = \left([\varepsilon]\frac{\partial G}{\partial n_{\Gamma}},v\right)_{\Gamma},
$$
where $[\varepsilon]=\varepsilon _s-\varepsilon _m$ is the jump of 
$\varepsilon$ at the interface. We shall still use $f_G$ to denote the 
smooth function $[\varepsilon]\frac{\partial G}{\partial n_{\Gamma}}$ 
on $\Gamma$.

It is easy to see that the linear equation (\ref{linear}) is the weak 
formulation of the elliptic interface problem 
$$
 -\nabla \cdot (\varepsilon \nabla u^l)=0 \text{ in } \Omega \quad
[u^l]=0, \; \left [\varepsilon \frac{\partial u^l}{\partial n}\right
]=f_G \text{ on }\Gamma, \quad \text{and}\; u=0  \mbox{ on}\,
\partial \Omega.
$$
Since $f_G\in C^{\infty}(\Gamma)$ and $\Gamma\in C^{2}$, by the 
regularity result of the elliptic interface problem 
\cite{Babuska.I1970,Bramble.J;King.J1996,Chen.Z;Zou.J1998,Savare.G1998}, 
we have $u^l\in H^2(\Omega _m)\cap H^2(\Omega _s)\cap 
H_0^1(\Omega)$. In particular by the embedding theorem we conclude that
$u^l\in L^{\infty}(\Omega).$%
\qquad\end{proof}

To derive a similar estimate for the nonlinear part $u^n$, we define
\begin{align*}
\alpha ' & = \arg \max_c \Big( \bar{\kappa} ^2\sinh( c + \sup_{x\in
 \Omega_s} (u^l+ G) ) \leq 0\Big ), & \alpha & = \min \Big(\alpha ',
\inf _{\partial \Omega}(g-G)\Big), \\
\beta ' & = \arg \min_c \Big(
\bar{\kappa} ^2\sinh( c + \inf_{x\in \Omega_s} (u^l+G) ) \geq 0\Big ),
& \beta & = \max \Big(\beta ', \sup _{\partial \Omega}(g-G)\Big).
\end{align*}
The next lemma gives the a priori $L^{\infty}$-estimate of
$u^n$.

\begin{lemma}\label{lm:nonlinear}
Let $u^n$ be the weak solution of {\rm (\ref{nonlinear})}. Then 
$\alpha \leq u^n\leq \beta$, and thus 
\begin{equation}
\label{eq:un}
u^n\in L^{\infty}(\Omega).
\end{equation}
\end{lemma}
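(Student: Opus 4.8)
The plan is to prove the pointwise bounds $\alpha \le u^n \le \beta$ by a variational/truncation argument (the Stampacchia method), using the fact that $u^n$ is the minimizer of a strictly convex energy over the convex set $M$. First I would note that $u^n\in M$ minimizes
$$
E^n(w) = \int_\Omega \frac{\varepsilon}{2}|\nabla w|^2 + \bar\kappa^2\cosh(w+u^l+G),
$$
whose first variation is exactly the weak form (\ref{nonlinear}). The key structural observations are: (i) the nonlinear coefficient $\bar\kappa^2$ is supported in $\Omega_s$, where both $u^l$ and $G$ are smooth and bounded (by Lemma \ref{lm:ul} and the fact that $G\in C^\infty(\Omega_s)$, using the separation assumption (\ref{sigma})), so $\sup_{\Omega_s}(u^l+G)$ and $\inf_{\Omega_s}(u^l+G)$ are finite; (ii) the boundary data $g-G$ is smooth on $\partial\Omega$ hence bounded; (iii) $\alpha,\beta$ are precisely chosen so that $\bar\kappa^2\sinh(\beta + (u^l+G)(x))\ge 0$ and $\bar\kappa^2\sinh(\alpha + (u^l+G)(x))\le 0$ for a.e.\ $x\in\Omega_s$, and simultaneously $\alpha\le g-G\le\beta$ on $\partial\Omega$.

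For the upper bound, I would test (\ref{nonlinear}) with $v=(u^n-\beta)^+\in H_0^1(\Omega)$ — this is a legitimate test function because $u^n - (g-G) \in H_0^1(\Omega)$ and $g-G\le\beta$ on $\partial\Omega$, so $(u^n-\beta)^+$ vanishes on $\partial\Omega$. Writing $w=(u^n-\beta)^+$, the diffusion term gives $A(u^n,w)=\int_{\{u^n>\beta\}}\varepsilon|\nabla u^n|^2 = \int_\Omega \varepsilon|\nabla w|^2\ge c\|\nabla w\|_0^2$. For the reaction term, on the set $\{u^n>\beta\}\cap\Omega_s$ we have $u^n+u^l+G > \beta + (u^l+G)(x)$, and since $\sinh$ is increasing, $\bar\kappa^2\sinh(u^n+u^l+G) \ge \bar\kappa^2\sinh(\beta+(u^l+G))\ge 0$ by the choice of $\beta'$; outside $\Omega_s$ the coefficient $\bar\kappa$ vanishes. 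Hence $(B(u^n+u^l),w)\ge 0$, and (\ref{nonlinear}) forces $c\|\nabla w\|_0^2 \le -(B(u^n+u^l),w)\le 0$, so $\nabla w=0$; since $w\in H_0^1(\Omega)$ this gives $w\equiv 0$, i.e.\ $u^n\le\beta$ a.e. The lower bound $u^n\ge\alpha$ is symmetric: test with $v=-(\alpha-u^n)^+\in H_0^1(\Omega)$ and use that $\bar\kappa^2\sinh(u^n+u^l+G)\le \bar\kappa^2\sinh(\alpha+(u^l+G))\le 0$ on $\{u^n<\alpha\}\cap\Omega_s$. The conclusion $u^n\in L^\infty(\Omega)$ is then immediate, and combined with Lemma \ref{lm:ul} it yields Theorem \ref{th:max}.

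The main obstacle I anticipate is not the truncation estimate itself (which is routine once set up) but rather justifying that the relevant suprema and infima are finite and that $\alpha,\beta$ are well-defined real numbers — this rests on showing $u^l+G$ is genuinely bounded on $\overline{\Omega_s}$. This is where the geometric hypothesis (\ref{sigma}) is essential: the charge points $x_i$ lie at distance at least $\sigma$ from $\Omega_s$, so $G=\sum q_i/(\varepsilon_m|x-x_i|)$ is smooth and bounded on $\overline{\Omega_s}$, and $u^l\in L^\infty(\Omega)$ by the elliptic interface regularity already established. A secondary technical point is the admissibility of the truncated test functions, which requires care with the boundary trace: one should verify $(u^n-\beta)^+$ lies in $H_0^1(\Omega)$ using $u^n\in M$ together with $\beta\ge\sup_{\partial\Omega}(g-G)$; this is standard but worth stating explicitly. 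A further subtlety is that $\sinh$ evaluated at $u^n+u^l+G$ must be interpreted correctly as an $L^1_{\mathrm{loc}}$ (indeed $L^1(\Omega_s)$) function — but membership of $u^n,u^l$ in $M$ guarantees $e^{\pm(u^n+u^l)}\in L^2(\Omega_s)$, so the reaction term and the pairing with the bounded test function $w$ make sense.
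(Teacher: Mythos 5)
Your proposal is correct and follows essentially the same route as the paper: both apply the Stampacchia/cutoff-function argument by testing the weak form (\ref{nonlinear}) with $(u^n-\beta)^+$ and $(u^n-\alpha)^-$, verify that these lie in $H^1_0(\Omega)$ using $\beta \ge \sup_{\partial\Omega}(g-G)$ and $\alpha \le \inf_{\partial\Omega}(g-G)$, exploit the sign of $\bar\kappa^2\sinh(u^n+u^l+G)$ on the corresponding level sets (which is exactly what the choices of $\beta'$ and $\alpha'$ engineer), and conclude $\nabla\phi=0$ hence $\phi\equiv 0$. The only cosmetic difference is that you frame it via the auxiliary energy $E^n$ before passing to the weak form, whereas the paper works directly with the weak form throughout.
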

\unskip

\begin{proof}
We use a cutoff-function argument similar to that used in 
\cite{Jerome.J1985}. Since the boundary condition $g-G\in 
C^{\infty}(\partial \Omega)$, we can find a $u_D\in H^1(\Omega)$ 
such that $u_D=g-G$ on $\partial \Omega$ in the trace sense, or more 
precisely
$$
T u_D = g-G,
$$
where $T:\Omega \mapsto \partial\Omega$ is the trace operator.  Then 
the solution can be written $u^n=u_D+u_0$, with $u_0\in H_0^1(\Omega)$.
Let $\overline{\phi} = (u^n-\beta)^+=\max (u^n-\beta, 0)$
and $\underline{\phi} = (u^n-\alpha)^-=\min (u^n-\alpha, 0)$. 
Then from
\begin{eqnarray*}
0 & \le & \overline{\phi} =(u^n-\beta)^+=(u_D+u_0-\beta)^+\leq (u_D-\beta)^+ + u_0^+,\\
0 & \ge & \underline{\phi} =(u^n-\alpha)^-=(u_D+u_0-\alpha)^-\geq (u_D-\alpha)^- + u_0^-,
\end{eqnarray*}
and
\begin{eqnarray*}
0 & \le & T\overline{\phi} \le T(u_D-\beta)^+ + Tu_0^+ = 0,
\\
0 & \ge & T\underline{\phi} \ge T(u_D-\alpha)^- + Tu_0^- = 0,
\end{eqnarray*}
we conclude that both $\overline{\phi}, \underline{\phi} \in H_0^1(\Omega)$.  
Thus for either $\phi=\overline{\phi}$ or $\phi=\underline{\phi}$, we have 
$$
(\varepsilon \nabla u^n,\nabla \phi)+(\bar{\kappa}
^2\sinh(u^n+u^l+G),\phi) = 0.
$$
Note that $\overline{\phi} \ge 0$ in $\Omega$ and its support is the set 
$\overline{\mathcal{Y}} = \{x \in \bar \Omega\, | \, u^n(x)\geq\beta\}$.  
On $\overline{\mathcal{Y}}$, we have 
$$
\bar{\kappa} ^2\sinh(u^n+u^l+G) \ge \bar{\kappa} ^2\sinh \Big(\beta ' +
\inf_{x\in \Omega_s} (u^l + G) \Big) \ge 0.
$$
Similarly, $\underline{\phi} \le 0$ in $\Omega$ with support set 
$\underline{\mathcal{Y}} = \{x \in \bar \Omega\, | \, u^n(x)\leq\alpha\}$.  
On $\underline{\mathcal{Y}}$, we now have 
$$
\bar{\kappa} ^2\sinh(u^n+u^l+G) \le \bar{\kappa} ^2\sinh \Big(\alpha ' +
\inf_{x\in \Omega_s} (u^l + G) \Big) \le 0.
$$
Together this implies
$$
0 \ge (\varepsilon \nabla u^n,\nabla \phi)=(\varepsilon \nabla
(u^n-\beta),\nabla \phi)=\varepsilon \|\nabla \phi\|^2 \ge 0
$$
for either $\phi=\overline{\phi}$ or $\phi=\underline{\phi}$.
Using the Poincare inequality we have finally
$$
0 \le \|\phi\| \lesssim \|\nabla \phi\| \le 0,
$$
giving $\phi =0$, again for either $\phi=\overline{\phi}$ or 
$\phi=\underline{\phi}$. Thus $\alpha \leq u^n\leq \beta$ in $\Omega$.%
\qquad
\end{proof}

\section{Finite element methods for the regularized Poisson--Boltzmann 
equation}\label{sec6}
In this section we shall discuss the finite element discretization of 
RPBE using linear finite element spaces $V^h_D$ and prove the
existence and uniqueness of the finite element approximation 
$u_h$. Furthermore, under some assumptions on the grids we shall 
derive a priori $L^{\infty}$-estimates for $u_h$ and use these
to prove that $u_h$ is a quasi-optimal approximation of $u$ in 
the $H^1$ norm in the sense that
\begin{equation}\label{mainestimate}
\|u-u_h\|_1\lesssim \inf _{v_h\in V^h_D}\|u-v_h\|_1.
\end{equation}

While the term on the left in~(\ref{mainestimate}) is in general 
difficult to analyze, the term on the right represents the fundamental 
question addressed by classical approximation theory in normed spaces, 
of which much is known. To bound the term on the right from above, one 
picks a function in $V^h_D$ which is particularly easy to work with, 
namely, a nodal or generalized interpolant of $u$, and then one employs
standard techniques in interpolation theory. Therefore, it is clear 
that the importance of approximation results such 
as~(\ref{mainestimate}) are that they completely separate the details 
of the Poisson--Boltzmann equation from the approximation theory, 
making available all known results on finite element interpolation of 
functions in Sobolev spaces (cf.~\cite{Ciarlet.P1978}). 

Now we assume $\Omega$ can be triangulated exactly (e.g., $\Omega$ is a
cube) with a shape regular and conforming (in the sense of 
\cite{Ciarlet.P1978}) triangulation $\mathcal T_h$. Here $h=h_{\max}$ represents 
the mesh size which is the maximum diameter of elements in the 
triangulation. We further assume in the triangulation that the discrete
interface $\Gamma _h$ approximates the known interface $\Gamma$ to the 
second order, i.e., $d(\Gamma, \Gamma _h)\leq Ch^2$.

Given such a triangulation $\mathcal T_h$ of $\Omega$, we construct 
the linear finite element space $V^h:=\{v\in H^1(\Omega), v|_{\tau}\in 
\mathcal P_1(\tau)\ \forall \tau \in \mathcal T_h\}$. Since the
boundary condition $g-G\in C^{\infty}(\partial \Omega)$, we can find a 
$u_D\in H^1(\Omega)$ such that $u_D=g-G$ on $\partial \Omega$ in the 
trace sense. Then the solution can be uniquely written as 
$u=u_D+u_{0}$, with $u_0\in H_0^1$. Thus we will use $\displaystyle
H_D^1(\Omega):=H_0^1(\Omega)+u_D$ to denote the affine space with 
a specified boundary condition and $\displaystyle V^h_D=V^h\cap
H_D^1(\Omega)$ to denote the finite element affine space of 
$H_D^1(\Omega)$. Similarly $V^h_0=V^h\cap H_0^1(\Omega)$. Here to
simplify the analysis the boundary condition is assumed to be
represented exactly.

Recall that the weak form of RPBE is
\begin{equation} \label{eqn:gal_cont}
\hspace*{6pt}\text{Find}~u \in H_D^1(\Omega) ~~\mbox{such that (s.t.) } A(u,v) + (
B(u),v) + \langle f_G, v \rangle = 0  ~~\forall v \in
H_0^1(\Omega).\hspace*{-8pt}
\end{equation}

We are interested in the quality of the finite element approximation:
\begin{equation}
\label{eqn:gal_disc}
\text{Find}~u_h \in V^h_D~~\text{ s.t.}~ A(u_h,v_h) + ( B(u_h),v_h
) + \langle f_G, v \rangle = 0 ~~\forall v_h \in V^h_0.
\end{equation}
It is easy to show that the finite element approximation $u_h$ is the
minimizer of $E$ in $V^h_D$, i.e., $E(u_h)=\inf _{v_h\in V^h_D}E(v_h)$.
Then the existence and uniqueness follows from section \ref{sec3} 
since $V^h_D$ is convex. As in the continuous setting, it will be 
convenient to split the discrete solution to the RPBE into linear 
and nonlinear parts $u_h = u_h^l + u_h^n$, where $u_h^l$ and 
$u_h^n$ satisfy, respectively,
\begin{equation}
\label{eqn:gal_disc_linear}
\text{Find}~u_h^l \in V^h_0~\text{s.t.}~ A(u_h^l,v_h) + \langle f_G, v
\rangle = 0 ~~\forall v_h \in V^h_0,
\end{equation}
\begin{equation}
\label{eqn:gal_disc_nonlinear}
\text{Find}~u_h^n \in V^h_D~\text{s.t.}~ A(u_h^n,v_h) + \langle
B(u_h^n+u_h^l),v_h \rangle =0 ~~\forall v_h \in V^h_0.
\end{equation}

\subsection{Quasi-optimal a priori error estimate}\label{sec6.1}
We begin with the following properties of the bilinear form $A$ and 
and operator $B$.

\begin{lemma}\label{lemma6.1}
{\rm 1.} The bilinear form $A(u,v)$ satisfies the coercivity and 
continuity conditions. That is, for $u, v\in H^1(\Omega)$
$$
\|u\|_1^2\lesssim A(u,u),\; \hbox{ and }\; A(u,v)\lesssim
\|u\|_1\|v\|_1.
$$

{\rm 2.} The operator $B$ is monotone in the sense that
$$
(B(u)-B(v),u-v)\geq \bar {\kappa}^2\|u-v\|^2\geq 0.
$$

{\rm 3.} The operator $B$ is bounded in the sense that for $u, v\in 
 L^{\infty}(\Omega), w\in L^2(\Omega)$,
$$
(B(u)-B(v),w)\leq C\|u-v\|\|w\|.
$$
\end{lemma}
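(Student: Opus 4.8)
The plan is to establish the three assertions in turn, each by reducing to an elementary pointwise inequality for $\sinh$ (or for $\varepsilon$) and then integrating; none of the three requires more than Cauchy--Schwarz, the mean value theorem, and convexity. For part~1, I would start from $A(u,u)=\int_\Omega\varepsilon|\nabla u|^2\ge\varepsilon_{\min}\|\nabla u\|^2$ with $\varepsilon_{\min}=\min\{\varepsilon_m,\varepsilon_s\}>0$; combined with the Poincar\'e--Friedrichs inequality on $H_0^1(\Omega)$ this gives $\|u\|_1^2\lesssim A(u,u)$ (which is all that is needed, since the differences $u-u_h$, $u_h-v_h$ arising in~(\ref{mainestimate}) lie in $H_0^1(\Omega)$). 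Continuity is immediate from Cauchy--Schwarz: $A(u,v)\le\varepsilon_{\max}\|\nabla u\|\,\|\nabla v\|\le\varepsilon_{\max}\|u\|_1\|v\|_1$ with $\varepsilon_{\max}=\max\{\varepsilon_m,\varepsilon_s\}$.

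For part~2, the key pointwise fact is $(\sinh a-\sinh b)(a-b)\ge(a-b)^2$ for all $a,b\in\mathbb{R}$, obtained by writing $\sinh a-\sinh b=\int_b^a\cosh t\,dt$ and using $\cosh t\ge1$. Taking $a=u+G$ and $b=v+G$, so that $a-b=u-v$, multiplying by $\bar{\kappa}^2\ge0$, and integrating over $\Omega$ yields $(B(u)-B(v),u-v)=\int_\Omega\bar{\kappa}^2\bigl(\sinh(u+G)-\sinh(v+G)\bigr)(u-v)\ge\int_\Omega\bar{\kappa}^2(u-v)^2=\bar{\kappa}^2\|u-v\|^2\ge0$, the last integral being supported on $\Omega_s$ since $\bar{\kappa}$ vanishes on $\Omega_m$. (Equivalently, $B$ is the G\^ateaux derivative of the strongly convex functional $w\mapsto\int_\Omega\bar{\kappa}^2\cosh(w+G)$.)

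For part~3, I would apply the mean value theorem pointwise, $\sinh(u+G)-\sinh(v+G)=\cosh(\xi)\,(u-v)$ with $\xi$ between $u+G$ and $v+G$, so that $(B(u)-B(v),w)=\int_\Omega\bar{\kappa}^2\cosh(\xi)\,(u-v)\,w$. The essential observation is that $\bar{\kappa}^2$ is supported in $\Omega_s$, where $G$ is smooth and, by the separation hypothesis~(\ref{sigma}), bounded; hence on $\Omega_s$ one has $|\xi|\le\max\{\|u\|_{L^\infty},\|v\|_{L^\infty}\}+\|G\|_{L^\infty(\Omega_s)}=:R<\infty$, so $\bar{\kappa}^2\cosh\xi\le\varepsilon_s\kappa^2\cosh R=:C$ on $\Omega_s$. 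Therefore $(B(u)-B(v),w)=\int_{\Omega_s}\bar{\kappa}^2\cosh(\xi)(u-v)w\le C\int_{\Omega_s}|u-v|\,|w|\le C\|u-v\|\,\|w\|$ by Cauchy--Schwarz.

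The three parts are all routine; the one substantive point --- and thus the \emph{main obstacle}, such as it is --- occurs in part~3, where the boundedness of the nonlinear operator $B$ on $L^\infty$ relies crucially on the cutoff structure of the coefficients, namely that $\bar{\kappa}$ vanishes exactly on $\Omega_m$ where $G$ blows up. Without this feature the exponential nonlinearity would not produce a bounded operator, so this is the same structural property exploited throughout Sections~\ref{sec3}--\ref{sec5}, and it is worth flagging explicitly in the proof.
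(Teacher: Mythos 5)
Your proof is correct and follows essentially the same route as the paper: parts~1 and~2 are standard (the paper dismisses them as ``straightforward''), and for part~3 the paper likewise invokes the mean value theorem, obtains $B(u)-B(v)=\bar\kappa^2\cosh(\theta u+(1-\theta)v+G)(u-v)$, and bounds the $\cosh$ factor in $L^\infty(\Omega_s)$ using the $L^\infty$ bounds on $u,v$ and the smoothness of $G$ away from the charges, with the only cosmetic difference being that the paper controls the intermediate $\cosh$ via convexity (so $\cosh(\theta(u+G)+(1-\theta)(v+G))\le\cosh(u+G)+\cosh(v+G)$) while you bound $|\xi|$ directly. Your observation that coercivity as literally stated fails for constants, and should be read on $H_0^1(\Omega)$ where the error terms actually live, is a legitimate precision the paper glosses over.
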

\unskip

\begin{proof}
The proof of (1) and (2) is straightforward. We now prove (3). By the 
mean value theorem, there exists $\theta \in (0,1)$ such that 
$$
B(u)-B(v)={\bar \kappa}^2 \cosh (\theta u+(1-\theta)v+G)(u-v).
$$
Then by the convexity of $\cosh$ and the fact that $u, v\in L^{\infty}(\Omega),
G\in C^{\infty}(\Omega_s)$, we get
\begin{align*}
\|\cosh (\theta u+(1-\theta)v+G)\|_{\infty,\Omega _s} \leq
 \|\cosh (u+G)\|_{\infty,\Omega _s}+\|\cosh (v+G)\|_{\infty,\Omega
  _s}\leq C.
\end{align*}
The desired result then follows since $B(\cdot)$ is nonzero only in
$\Omega_s$.%
\qquad\end{proof}

\begin{theorem}\label{theorem6.2}
Let $u$ and $u_h$ be the solution of RPBE and its finite element 
approximation, respectively. When $u_h$ is uniformly bounded, we have 
$$
\|u-u_h\|_1\lesssim \inf _{v_h\in V^h}\|u-v_h\|_1.
$$
\end{theorem}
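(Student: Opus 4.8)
The plan is to run a C\'ea-type argument adapted to the monotone nonlinearity, combining Galerkin orthogonality with the three structural properties of $A$ and $B$ recorded in Lemma~\ref{lemma6.1}. The first step is to note Galerkin orthogonality: since $V^h_0 \subset H_0^1(\Omega)$, subtracting the discrete equation~(\ref{eqn:gal_disc}) from the continuous equation~(\ref{eqn:gal_cont}) and testing against an arbitrary $v_h \in V^h_0$ --- at which point the two $\langle f_G, \cdot\rangle$ contributions cancel --- yields
\[
A(u-u_h, v_h) + (B(u)-B(u_h), v_h) = 0 \qquad \forall\, v_h \in V^h_0.
\]

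Next, write $e = u - u_h \in H_0^1(\Omega)$ and fix an arbitrary $v_h \in V^h_D$, so that $v_h - u_h \in V^h_0$ is an admissible test function. Coercivity (Lemma~\ref{lemma6.1}, part 1) gives $\|e\|_1^2 \lesssim A(e,e)$. Splitting $e = (u - v_h) + (v_h - u_h)$, applying Galerkin orthogonality to the second summand, and inserting $\pm u$ into the resulting $B$-term, one obtains
\[
A(e,e) = A(e, u - v_h) + (B(u)-B(u_h), u - v_h) - (B(u)-B(u_h), u - u_h).
\]
By the monotonicity in Lemma~\ref{lemma6.1}, part 2, the last term is nonnegative, so it may be discarded; continuity of $A$ (Lemma~\ref{lemma6.1}, part 1) bounds the first term by $\|e\|_1 \|u-v_h\|_1$.

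The decisive term is the middle one, and this is precisely where the hypothesis that $u_h$ is uniformly bounded is used. Since $u \in L^{\infty}(\Omega)$ by Theorem~\ref{th:max} and $u_h \in L^{\infty}(\Omega)$ with an $h$-independent bound by assumption, the bound in Lemma~\ref{lemma6.1}, part 3 applies with a constant independent of $h$, giving $(B(u)-B(u_h), u - v_h) \le C\|u-u_h\|\,\|u-v_h\| \le C\|e\|_1\|u-v_h\|_1$. Collecting the three estimates gives $\|e\|_1^2 \lesssim \|e\|_1 \|u - v_h\|_1$, hence $\|e\|_1 \lesssim \|u - v_h\|_1$; taking the infimum over $v_h \in V^h_D$ (equivalently, up to a boundary-interpolation term of the same order, over $V^h$) gives the assertion.

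I expect the only genuine subtlety to be the nonlinear bookkeeping that lets $B$ be treated as a lower-order perturbation: one must exploit monotonicity to delete the ``diagonal'' term $(B(u)-B(u_h), u-u_h)$, and then convert $(B(u)-B(u_h), u - v_h)$ into an $H^1$-controlled quantity with a constant that does not deteriorate as $h \to 0$ --- which is exactly what the continuous $L^{\infty}$ bound of Theorem~\ref{th:max} together with the assumed uniform discrete $L^{\infty}$ bound provide. The remaining manipulations are the routine C\'ea estimate.
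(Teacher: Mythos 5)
Your argument is correct and is, step for step, the same C\'ea-type proof the paper gives: Galerkin orthogonality, coercivity and continuity of $A$, deletion of the diagonal term $(B(u)-B(u_h),u-u_h)$ by monotonicity, and control of the off-diagonal $B$-term via the $L^\infty$ bounds feeding Lemma~\ref{lemma6.1}(3). You also correctly flag the small point that the infimum is really taken over $V^h_D$ (so that $v_h-u_h\in V^h_0$), a slip in the theorem's statement that the paper's own proof also quietly makes.
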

\unskip

\begin{proof}
By the definition, the error $u-u_h$ satisfies
$$
A(u-u_h, w_h) + (B(u)-B(u_h), w_h)=0 \quad \forall w_h\in V^h_0.
$$
We then have, for any $v_h\in V_D^h$,
\begin{eqnarray*}
\|u-u_h\|_1^2&\lesssim &A(u-u_h,u-u_h)=A(u-u_h,u-v_h)+A(u-u_h,
v_h-u_h)\\ &\lesssim &\|u-u_h\|_1\|u-v_h\|_1-(B(u)-B(u_h),v_h-u_h).
\end{eqnarray*}
The second term on the right side is estimated by
\begin{eqnarray*}
-(B(u)-B(u_h),v_h-u_h)&=&-(B(u)-B(u_h),u-u_h)+(B(u)-B(u_h),u-v_h)\\
&\leq &(B(u)-B(u_h),u-v_h)\\ &\lesssim & \|u-u_h\|_1\|u-v_h\|_1.
\end{eqnarray*}
Here we make use of the monotonicity of $B$ in the second step and the
boundness of $B$ in the third step. In summary we obtain for any 
$v_h\in V_D^h$
$$
\|u-u_h\|_1\lesssim \|u-v_h\|_1,
$$
which leads to the desired result by taking the infimum.%
\qquad\end{proof}

\subsection{Discrete a priori $L^{\infty}$-estimates}\label{sec6.2}
We now derive $L^{\infty}$-estimates of the finite element 
approximation $u_h$. To this end, we have to put assumptions on the 
grid. Let $(a_{ij})$ denote the matrix of the elliptic operator 
$(\varepsilon \nabla u, \nabla v)$, i.e., $a_{i,j}=A(\varphi _i, \varphi_j)$.
Two nodes $i$ and $j$ are adjacent if there is an edge connecting them.

(A1) The off-diagonal term $a_{i,j},$ $i, j$ are
adjacent, satisfies
$$
a_{i,j}\leq -\frac{\rho}{h^2}\sum _{e_{i,j}\subset T}|T|, \quad
\text{with }\; \rho >0.
$$

We now give example grids satisfying (A1). In three dimensions, to 
simplify the generation of the grid, we choose $\Omega$ as a cube and 
divide into small cubes with length $h$. For each small cube, we 
divide it into 5 tetrahedra; see Figure~\ref{5tet} for a prototype of the
triangulation of one cube. Neighbor cubes are triangulated in the same 
fashion (with different reflection to make the triangulation 
conforming). By the formula of the local stiffness matrix in 
\cite{Kerkhoven.T;Jerome.J1990,Xu.J;Zikatanov.L1999}, it is easy to 
verify that the grids will satisfy assumption (A1). We comment that 
the uniform grid obtained by dividing each cube into 6 tetrahedra
will not satisfy the assumption (A1), since in this case if $i,j$ are
vertices of diagonal of some cube, then $a_{ij}=0$.

\begin{figure}[tbp!]
\begin{center}
\begin{pdfpic}
\psset{unit=0.8cm}
\begin{pspicture}*(-0.1,-0.1)(4.1,4.1)
\pspolygon(0,0)(3,0)(3,3)(0,3)
\psline(0,3)(1,4)(4,4)(3,3)(1,4)
\psline(3,0)(4,1)(4,4)
\psline(0,0)(3,3)(4,1)
\psline[linestyle=dashed](0,0)(1,1)(1,4)
\psline[linestyle=dashed](1,1)(4,1)
\pspolygon[linestyle=dashed](0,0)(4,1)(1,4)
\end{pspicture}
\end{pdfpic}
\caption{Divide a cube into $5$ tetrahedra.\label{5tet}}
\end{center}
\end{figure}
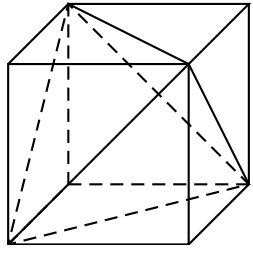

\begin{theorem}\label{theorem6.3}
In general dimension $\mathbb R^d, d\geq 2$, with assumption {\rm (A1)} 
and $h$ sufficiently small, the finite element approximation $u_h$ of RPBE 
satisfies
$$
\|u_h\|_{\infty}\leq C,
$$
where $C$ is independent of $h$.
\end{theorem}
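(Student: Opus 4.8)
The plan is to establish Theorem~\ref{theorem6.3} as the fully discrete analogue of the continuous $L^{\infty}$-estimate of Section~\ref{sec5}. I would reuse the splitting $u_h = u_h^l + u_h^n$ of \eqref{eqn:gal_disc_linear}--\eqref{eqn:gal_disc_nonlinear} and bound the two pieces separately, mirroring the roles of Lemmas~\ref{lm:ul} and~\ref{lm:nonlinear} in the proof of Theorem~\ref{th:max}; the conclusion is then immediate from $\|u_h\|_{\infty} \leq \|u_h^l\|_{\infty} + \|u_h^n\|_{\infty}$.

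\emph{The linear part $u_h^l$.} Because $u_h^l$ is, up to the $O(h^2)$ perturbation of the interface, the $A$-orthogonal Galerkin projection of the continuous solution $u^l$ of \eqref{linear}, and because $u^l \in H^2(\Omega_m) \cap H^2(\Omega_s) \cap H_0^1(\Omega) \subset L^{\infty}(\Omega)$ by Lemma~\ref{lm:ul}, I would obtain a bound $\|u_h^l\|_{\infty} \leq C$ with $C$ independent of $h$, for $h$ small. Concretely one writes
\[
\|u_h^l\|_{\infty} \leq \|I_h u^l\|_{\infty} + \|u_h^l - I_h u^l\|_{\infty} \leq \|u^l\|_{\infty} + C\, h^{-d/2}\, \|u_h^l - I_h u^l\|,
\]
with $I_h$ the nodal interpolant, and bounds the last term by the standard $L^2$ finite element error estimate for the elliptic interface problem (available from the regularity results cited for Lemma~\ref{lm:ul}), which in $d = 2, 3$ decays faster than $h^{d/2}$; alternatively one argues directly by a discrete maximum principle (valid under (A1)) comparing $u_h^l$ with finite element solutions driven by bounds on the smooth interface datum $f_G$.

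\emph{The nonlinear part $u_h^n$.} I would define discrete cut-off levels $\alpha_h \leq \beta_h$ exactly as $\alpha, \beta$ were defined just before Lemma~\ref{lm:nonlinear}, but with $u_h^l$ in place of $u^l$; by the uniform bound on $u_h^l$ from the previous step and by $\|G\|_{L^{\infty}(\Omega_s)} < \infty$ (a consequence of the separation hypothesis~\eqref{sigma}), the levels $\alpha_h, \beta_h$ are bounded independently of $h$. Then, imitating the cut-off argument of Lemma~\ref{lm:nonlinear}, I would test \eqref{eqn:gal_disc_nonlinear} with $\phi_h \in V_0^h$ the nodal interpolant of $(u_h^n - \beta_h)^+$ (it belongs to $V_0^h$ since $\beta_h \geq \sup_{\partial\Omega}(g - G)$), and symmetrically with the nodal interpolant of $(u_h^n - \alpha_h)^-$. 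Assumption (A1) is what makes this work: since the stiffness matrix $a_{ij} = A(\varphi_i, \varphi_j)$ has strictly negative off-diagonal entries between adjacent nodes while constants lie in $\ker A$, one gets the algebraic inequality $A(u_h^n, \phi_h) = A(u_h^n - \beta_h, \phi_h) \geq A(\phi_h, \phi_h) \geq 0$, the discrete substitute for the identity $(\varepsilon \nabla(u^n - \beta), \nabla (u^n - \beta)^+) = \varepsilon \|\nabla (u^n - \beta)^+\|^2$ of Lemma~\ref{lm:nonlinear}; and the choice of $\beta_h$ is designed so that $\bar{\kappa}^2 \sinh(u_h^n + u_h^l + G)$ pairs nonnegatively with $\phi_h \geq 0$. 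Feeding these two facts into \eqref{eqn:gal_disc_nonlinear} squeezes $A(\phi_h, \phi_h) = 0$, hence $\phi_h \equiv 0$, i.e. $u_h^n \leq \beta_h$ at every node, and likewise $u_h^n \geq \alpha_h$; since $u_h^n$ is piecewise linear, $\|u_h^n\|_{\infty} \leq \max(|\alpha_h|, |\beta_h|) \leq C$.

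The step I expect to be the main obstacle is the sign control of the nonlinear term in the previous paragraph, and this is exactly why (A1) and ``$h$ sufficiently small'' are imposed. The cut-off $\phi_h$ is only the \emph{interpolant} of $(u_h^n - \beta_h)^+$, not that function itself, so the Stampacchia-type cancellation and the pointwise sign of $\bar{\kappa}^2 \sinh(u_h^n + u_h^l + G)$ are not available for free on the elements straddling the level set $\{u_h^n = \beta_h\}$ and the discrete interface $\Gamma_h$; recovering the sign of $A(u_h^n - \beta_h, \phi_h)$ uses the M-matrix structure from (A1), and controlling the nonlinear term there uses the uniform bound on $u_h^l$ (and, if needed, mass lumping of the nonlinear term --- as is customary for this type of problem --- together with $h$ small). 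This is precisely the finite element cut-off technique for semilinear equations with exponential nonlinearities developed in~\cite{Jerome.J1985,Kerkhoven.T;Jerome.J1990}, which I would adapt; the remaining ingredient, the uniform $L^{\infty}$-bound for the linear part on quasi-uniform meshes, is routine up to the familiar logarithmic factor in two dimensions.
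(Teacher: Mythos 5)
Your proposal follows the same overall structure as the paper's proof: decompose $u_h = u_h^l + u_h^n$, bound the linear part via its relation to the continuous $u^l$, and bound the nonlinear part via a discrete cutoff argument that relies on (A1). The paper handles the nonlinear part $u_h^n$ exactly as you describe, except that it does not reproduce the argument at all --- it simply cites Theorem~3.3 of \cite{Kerkhoven.T;Jerome.J1990}; your reconstruction of that cut-off technique (including the identification of the main obstacle: $\phi_h$ is only the nodal interpolant of $(u_h^n - \beta_h)^+$, so neither the Stampacchia cancellation nor the pointwise sign of the nonlinearity is free on elements straddling the discrete level set) is faithful to that reference and correct in spirit.

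For the linear part, however, the paper goes a different route from your primary one. It invokes Besov regularity $u^l \in B^{3/2}_{2,\infty}(\Omega)$ from \cite{Savare.G1998} and quotes a direct $L^{\infty}$ error estimate $\|u^l - u_h^l\|_{\infty} \leq C h^s_{\max}$ for some $s \in (0, 3/2)$ on quasi-uniform grids, then applies the triangle inequality. Your primary route --- $\|u_h^l\|_{\infty} \leq \|u^l\|_{\infty} + C h^{-d/2} \|u_h^l - I_h u^l\|$ plus an $L^2$ error estimate ``decaying faster than $h^{d/2}$'' --- is shakier in $d=3$: with only $B^{3/2}_{2,\infty}$ global regularity and a mesh that need not fit $\Gamma$ exactly, the $L^2$ error rate is at best on the order of $h^{3/2}$ and may carry an $\epsilon$-loss, so $h^{-3/2}\cdot h^{3/2-\epsilon}$ is not bounded. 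Note the paper only needs \emph{some} positive $L^{\infty}$ rate $s$, which is a strictly weaker demand than what your inverse-inequality argument requires, and this is exactly why it works. Your stated alternative (a discrete maximum principle for $u_h^l$ under (A1), using that the interface datum $f_G$ is smooth and bounded) is sound and would also close the argument; you should lead with that, or with the Besov/$L^{\infty}$ route, rather than with the inverse-inequality route.
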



\begin{proof}
We shall use the decomposition $u_h=u_h^n+u_h^l$. By the regularity 
result~\cite{Savare.G1998}, we know $u^l\in B^{3/2}_{2,\infty}(\Omega)$
and thus obtain a priori estimate on quasi-uniform grids
$$
\|u^l-u^l_h\|_{\infty}\leq Ch^{s}_{\max} \leq C {\rm diam}(\Omega)^s\; 
\text{ for some }s\in (0,3/2).
$$
This implies that $\|u^l_h\|_{\infty}\leq \|u^l\|_{\infty} + 
\|u^l-u^l_h\|_{\infty} \leq C$ is uniformly bounded with respect to 
$h_{\max}$. The estimate of $u^n_h$ follows from Theorem 3.3 
in~\cite{Kerkhoven.T;Jerome.J1990}, where the grid assumption (A1) is
used.%
\qquad\end{proof}

In two dimensions, we can relax the assumption on the grid and obtain 
a similar result. Later we will see that, due to this relaxation, the local
refinement in two dimensions is pretty simple.

(A1$'$) The off-diagonal terms $a_{i,j}\leq 0, j\neq i$;
i.e., the stiffness matrix corresponding to $A(\cdot, \cdot)$ is an
M-matrix.

\begin{theorem}\label{theorem6.4}
For a two-dimensional triangulation satisfying {\rm (A1$'$)}, the finite
element approximation $u_h$ of RPBE is bounded, i.e.,
$$
\|u_h\|_{\infty}\leq C.
$$
\end{theorem}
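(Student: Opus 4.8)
The plan is to mirror the three-dimensional argument in Theorem~\ref{theorem6.3}, using the splitting $u_h = u_h^l + u_h^n$, but to exploit the weaker M-matrix hypothesis (A1$'$) available in two dimensions. First I would handle the linear part $u_h^l$. In two dimensions the regularity result for the elliptic interface problem (as in Lemma~\ref{lm:ul}, using \cite{Babuska.I1970,Bramble.J;King.J1996,Chen.Z;Zou.J1998,Savare.G1998}) gives $u^l \in H^{1+s}(\Omega)$ for some $s>0$, so that $u^l \in L^\infty(\Omega)$ by Sobolev embedding; an $L^\infty$-bound on the finite element approximation $u_h^l$ then follows from a standard a priori estimate $\|u^l - u_h^l\|_\infty \le C h^{s'}|\log h|^{\alpha}$ on quasi-uniform grids (the logarithmic factor is harmless since $h \le \mathrm{diam}(\Omega)$), giving $\|u_h^l\|_\infty \le \|u^l\|_\infty + \|u^l - u_h^l\|_\infty \le C$ independent of $h$.

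Next I would treat the nonlinear part $u_h^n$, which solves~(\ref{eqn:gal_disc_nonlinear}). Here the key is a discrete maximum-principle argument analogous to the continuous one in Lemma~\ref{lm:nonlinear}. With $\alpha, \beta$ defined (as in Section~\ref{sec5}) using the now-established bound on $u_h^l$ together with the smooth, bounded behavior of $G$ on $\overline{\Omega}_s$, I would test the discrete equation at the node $i^\star$ where $u_h^n$ attains its maximum value over interior nodes. By (A1$'$) the off-diagonal entries $a_{i^\star j} \le 0$, so $\sum_j a_{i^\star j}(u_h^n)_{i^\star} \le \sum_j a_{i^\star j}(u_h^n)_j$, i.e. the ``diffusion row'' evaluated at the peak node has the right sign; combined with the sign of the nonlinear term $\bar\kappa^2 \sinh(u_h^n + u_h^l + G) \ge 0$ wherever $u_h^n + u_h^l + G$ is large enough (which holds at the peak node once $(u_h^n)_{i^\star} > \beta$), one reaches a contradiction unless $(u_h^n)_{i^\star} \le \beta$. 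Since $u_h^n$ is piecewise linear, its maximum is attained at a node, and on $\partial\Omega$ it equals the interpolant of $g-G$, which is bounded; hence $u_h^n \le \beta$ on all of $\Omega$. The lower bound $u_h^n \ge \alpha$ follows symmetrically by testing at the minimizing node. One must be slightly careful that the nonlinear term $B$ acts only in $\Omega_s$ (it vanishes in $\Omega_m$ since $\bar\kappa \equiv 0$ there), so the peak-node argument has to distinguish whether $i^\star$ lies in $\overline{\Omega}_s$ or in the interior of $\Omega_m$; in the latter case only the pure-Laplacian M-matrix argument is needed and the bound is inherited from the boundary of $\Omega_m$ where the coefficients jump. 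Adding the two bounds gives $\|u_h\|_\infty \le \|u_h^l\|_\infty + \max(|\alpha|,|\beta|) \le C$.

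The main obstacle is the nonlinear peak-node argument: unlike the linear M-matrix case, one cannot simply invoke an existing discrete maximum principle off the shelf, because the nonlinearity $\sinh$ couples $u_h^n$ with the fixed (but only $L^\infty$, not nodewise-controlled) function $u_h^l + G$, and because the nonlinear term is supported only on $\Omega_s$. The resolution is exactly the choice of $\alpha'$ and $\beta'$ via the $\arg\max/\arg\min$ over the range of $u_h^l + G$ on $\Omega_s$ that appears in Section~\ref{sec5}: these constants are engineered so that $\sinh(c + \inf_{\Omega_s}(u_h^l+G)) \ge 0$ for $c \ge \beta'$ and $\le 0$ for $c \le \alpha'$, which is precisely what makes the sign bookkeeping at the peak node go through. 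In two dimensions the only extra simplification relative to Theorem~\ref{theorem6.3} is that (A1$'$) — pure nonpositivity of off-diagonal entries, automatic for Delaunay-type triangulations — replaces the stronger quantitative bound (A1) needed in three dimensions, and this is exactly the relaxation promised in the text; no smallness of $h$ is required.
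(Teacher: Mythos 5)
Your splitting $u_h = u_h^l + u_h^n$ and the treatment of the linear piece $u_h^l$ (interface regularity $\Rightarrow u^l \in L^\infty$, then a quasi-uniform $L^\infty$ a priori error estimate to bound $u_h^l$ uniformly) reproduce the first half of the argument for Theorem~\ref{theorem6.3}, which the paper's proof of Theorem~\ref{theorem6.4} simply references with ``Similarly''. For the nonlinear piece $u_h^n$, however, the paper does not spell out a discrete maximum-principle argument at all: it invokes Theorem~3.1 of \cite{Kerkhoven.T;Jerome.J1990}, the two-dimensional $L^\infty$ stability result for finite element approximations of monotone semilinear problems on M-matrix meshes. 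Your attempt to reconstruct that step explicitly contains a genuine gap.

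The ``test at the peak node $i^\star$'' argument does not close as stated. Testing~(\ref{eqn:gal_disc_nonlinear}) with the hat function $\phi_{i^\star}$ produces the \emph{patch integral}
$\int_{\Omega} \bar{\kappa}^2 \sinh\!\left(u_h^n + u_h^l + G\right)\phi_{i^\star}\,dx$,
not a nodal evaluation. The sign condition $\sinh(\cdot)\ge 0$ may hold \emph{at} the node $i^\star$ once $(u_h^n)_{i^\star}>\beta$, but nothing forces the adjacent nodal values of $u_h^n$ to exceed $\beta$, so on most of the support of $\phi_{i^\star}$ the argument of $\sinh$ can be small or negative and the integrand can change sign; the row inequality $A(u_h^n,\phi_{i^\star})\ge 0$ from (A1$'$) is therefore not in contradiction with the equation. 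This is precisely the difficulty that \cite{Kerkhoven.T;Jerome.J1990} address: their argument tests with the nodal interpolant of the truncation, $I_h\!\left[(u_h^n-\beta)^+\right]$ --- the discrete analogue of the cutoff function in Lemma~\ref{lm:nonlinear}, not a single hat function --- and uses (A1$'$) to prove the discrete truncation inequality $A\!\left(u_h^n, I_h[(u_h^n-\beta)^+]\right)\gtrsim \|\nabla I_h[(u_h^n-\beta)^+]\|^2$, together with a lumping/convexity device that gives the nonlinear term the correct sign for this particular test function. To make your proof complete you would either need to import that lemma or carry out the lumped cutoff argument yourself; the peak-node calculation alone does not suffice. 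A secondary point: the thresholds $\alpha',\beta'$ must be set using $u_h^l$ (or a bound uniform over $u^l$ and $u_h^l$) rather than $u^l$; this is harmless once $\|u_h^l\|_\infty\le C$ is in hand, but it should be stated. Your other observations --- that (A1$'$) replaces (A1) in two dimensions and that no smallness of $h$ is required here --- agree with the paper.
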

\unskip

\begin{proof}
Similarly $\|u^l_h\|_{\infty}\leq C$ is uniformly bounded. In two 
dimensions the estimate of $u^n_h$ follows from Theorem 3.1 
in~\cite{Kerkhoven.T;Jerome.J1990}, where the grid assumption (A1) 
is\linebreak used.%
\qquad\end{proof}

\section{Convergence of adaptive finite element approximation}\label{sec7}
In this section, we shall follow the framework presented in 
\cite{Verfurth.R1994,Verfurth.R1996} to derive an  a posteriori
error estimate. Furthermore we shall present an adaptive method 
through local refinement based on this error estimator and prove that it
will converge. The a priori $L^{\infty}$-estimates of the
continuous and discrete problems derived in the previous sections play an
important role here.

\subsection{A posteriori error estimate}\label{sec7.1}
There are several approaches to adaptive error control, among which 
the one based on  a posteriori error estimation is usually the
most effective and most general. Although most existing work on  a
posteriori estimates has been for linear problems, extensions to
the nonlinear case can be made through linearization. For example, 
consider the nonlinear problem
\begin{equation}\label{F}
F(u)=0, \quad F\in C^1(\mathcal B_1, B_2^*),
\end{equation}
where the Banach spaces $\mathcal B_1$ and $\mathcal B_2$ are,
e.g., Sobolev spaces and where $\mathcal B^*$ denotes the dual space
of $\mathcal B$. Consider now also a discretization of
(\ref{F})
\begin{equation}\label{Fh}
F_h(u_h)=0, \quad F_h\in C^0(U_h, V_h^*),
\end{equation}
where $U_h\subset \mathcal B_1$ and $V_h\subset \mathcal B_2$. For the 
RPBE and a finite element discretization, the function spaces would be 
taken to be $\mathcal B_1=\mathcal B_2=H_0^1(\Omega)$. The nonlinear 
residual $F(u_h)$ can be used to estimate the error through the use of 
a linearization inequality
\begin{equation}
\label{eqn:verf}
C_1 \|F(u_h)\|_{\mathcal B_2^*} \le \| u - u_h \|_{\mathcal B_1} \le
C_2 \|F(u_h)\|_{\mathcal B_2^*}.
\end{equation}
See, for example,~\cite{Verfurth.R1994} for a proof of this linearization
result under weak assumptions on $F$. The estimator is then based on an 
upper bound on the dual norm of the nonlinear residual on the right 
in~(\ref{eqn:verf}).

In this section, to show the main idea, we will assume 
$F_h(u_h)=F(u_h)$ by making the following assumption on the 
grid.

(A2) The smooth interface $\Gamma$ is replaced by its
discrete approximation $\Gamma _h$ such that $\varepsilon$ and 
$\bar{\kappa}$ are piecewise constants on each element of the 
triangulation $\mathcal T_h$.

In our setting of the weak formulation, we need to estimate 
$\|F(u_h)\|_{-1,\Omega}$. To this end, we first introduce quite a bit 
of notation. We assume that the $d$-dimensional domain $\Omega$ has 
been exactly triangulated with a set $\mathcal T_h$ of shape-regular 
$d$-simplices (the finite dimension $d$ is arbitrary, not restricted 
to $d\leq 3$, throughout this discussion). A family of simplices will 
be referred to here as shape-regular in the sense of 
\cite{Ciarlet.P1978}.

It will be convenient to introduce the following notation:
\begin{center}
\begin{tabular}{lcl}
$\mathcal T_h$ & = & the set of shape-regular simplices triangulating the
 domain $\Omega$. \\

$\mathcal {N}(\tau)$ & = & the union of faces contained in simplex set
 $\tau$ lying on $\partial \Omega$. \\

$\mathcal {I}(\tau)$ & = & the union of faces contained in simplex set
 $\tau$ not in $\mathcal {N}(\tau)$. \\

$\mathcal {F}(\tau)$ & = & $\mathcal {N}(\tau) \cup \mathcal
 {I}(\tau)$. \\

$\mathcal F$ & = & $\cup _{\tau \in \mathcal T_h} \mathcal F(\tau)$.\\

$\omega_{\tau}$ & = & $~\bigcup~ \{~ \tilde{\tau} \in \mathcal T_h ~|~
 \tau \bigcap \tilde{\tau} \ne \varnothing, ~\text{where}~\tau \in
 \mathcal T_h ~\}$. \\

$\omega_S$ & = & $~\bigcup~ \{~ \tilde{\tau} \in \mathcal T_h ~|~ S
 \bigcap \tilde{\tau} \ne \varnothing, ~\text{where}~S \in \mathcal {F}
 ~\}$. \\

$h_{\tau}$ & = & the diameter of the simplex $\tau$. \\

$h_S$ & = & the diameter of the face $S$. \\
\end{tabular}
\end{center}
When the argument to one of the face set functions $\mathcal {N}$, 
$\mathcal {I}$, or $\mathcal {F}$ is in fact the entire set of 
simplices, we will leave off the explicit dependence on $\mathcal {S}$ 
without danger of confusion. Finally, we will also need some notation 
to represent discontinuous jumps in function values across faces 
interior to the triangulation. For any face $S \in \mathcal {N}$, let 
$n_S$ denote the unit outward normal; for any face $S\in \mathcal 
{I}$, take $n_S$ to be an arbitrary (but fixed) choice of one of the 
two possible face normal orientations. Now, for any $v \in 
L^2(\Omega)$ such that $v \in C^0(\tau)\ \forall \tau \in \mathcal
T_h$, define the {\em jump function}:
$$
[v]_S(x) = \lim_{t \rightarrow 0^+} v(x+t n_S) - \lim_{t
 \rightarrow 0^-} v(x-t n_S).
$$

We now define our  a posteriori error estimator
\begin{equation}\label{eqn:estimator}
\eta_{\tau}^2(u_h) = h_{\tau}^2 \| B(u_h)\|_{0,\tau}^2 + \frac{1}{2}
\sum_{S \in \mathcal {I}(\tau)} h_S\| \left[ n_S \cdot (\varepsilon
 \nabla u_h + (\varepsilon- \varepsilon _m)\nabla G )\right]_S
\|_{0,S}^2,
\end{equation}
and the oscillation
\begin{equation}\label{eqn:oscillation}
{\rm osc}^2_{\tau}(u_h) = h_{\tau}^4\left (\|\nabla
u_h\|_{0,\tau}^2+\|\nabla G\|_{0,\tau}^2\right ).
\end{equation}

\begin{theorem}\label{theorem7.1}
Let $u \in H^1(\Omega)$ be a weak solution of the RPBE and $u_h$ be the
finite element approximation with a grid satisfying assumptions {\rm (A1)}
and {\rm (A2)}. There exist two constants depending only on the shape
regularity of $\mathcal T_h$ such that
\begin{equation} 
\| u - u_h \|_{1}^2 \le
C_1 \, \eta _h^2 + C_2 \, {\rm osc}_h^2,\label{eq7.6}
\end{equation}
where
$$
\eta _h^2:=\sum_{\tau \in \mathcal T_h} \eta_{\tau}^2(u_h), \quad
\text{and}\quad {\rm osc}_h^2:=\sum _{\tau \in \mathcal T_h\cap \Omega
 _s}{\rm osc}^2_{\tau}(u_h).
$$
\end{theorem}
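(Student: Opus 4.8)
The plan is to establish the a posteriori upper bound~\eqref{eq7.6} via the standard residual-based approach of~\cite{Verfurth.R1994,Verfurth.R1996}, adapted to the RPBE. The starting point is the coercivity of $A$ from Lemma~\ref{lemma6.1}, which gives
\[
\|u-u_h\|_1^2 \lesssim A(u-u_h,u-u_h) + (B(u)-B(u_h),u-u_h),
\]
where the extra monotone term is nonnegative by Lemma~\ref{lemma6.1}(2) and hence harmless (it can be added for free). Writing $e = u-u_h$ and using the weak form~\eqref{eqn:gal_cont} satisfied by $u$, the right-hand side equals the nonlinear residual $\langle F(u_h), e\rangle = -A(u_h,e) - (B(u_h),e) - \langle f_G, e\rangle$. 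The goal is then to bound this residual by $(\sum_\tau \eta_\tau^2 + {\rm osc}_\tau^2)^{1/2}\,\|e\|_1$ and divide through.

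The main work is the standard residual manipulation. First I would insert the Galerkin orthogonality: for any $v_h \in V^h_0$, $\langle F(u_h), v_h\rangle = 0$, so $\langle F(u_h), e\rangle = \langle F(u_h), e - v_h\rangle$. Choosing $v_h = \Pi_h e$ to be a Cl\'ement/Scott--Zhang quasi-interpolant of $e$, one integrates by parts elementwise. This produces (i) element residuals $\nabla\cdot(\varepsilon\nabla u_h) + \nabla\cdot((\varepsilon-\varepsilon_m)\nabla G) - B(u_h)$ on each $\tau$, and (ii) jump terms $[n_S\cdot(\varepsilon\nabla u_h + (\varepsilon-\varepsilon_m)\nabla G)]_S$ across interior faces. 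On a single element with $u_h$ piecewise linear and $\varepsilon,\bar\kappa$ piecewise constant (this is where assumption~(A2) enters, ensuring $F_h(u_h)=F(u_h)$ and that the diffusion term contributes nothing to the volume residual), the volume residual reduces to $-B(u_h)$ plus the divergence of the known smooth field $(\varepsilon-\varepsilon_m)\nabla G$; the latter is the source of the oscillation term~\eqref{eqn:oscillation}, handled by subtracting a piecewise average and using $h_\tau^2$-scaling together with $\|\nabla G\|_{0,\tau}$ and $\|\nabla u_h\|_{0,\tau}$. The jump term gives exactly the face contribution in~\eqref{eqn:estimator}. Applying Cauchy--Schwarz, the standard interpolation estimates $\|e-\Pi_h e\|_{0,\tau} \lesssim h_\tau\|e\|_{1,\omega_\tau}$ and $\|e-\Pi_h e\|_{0,S}\lesssim h_S^{1/2}\|e\|_{1,\omega_S}$, and finite overlap of the patches $\omega_\tau$, one arrives at~\eqref{eq7.6}.

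One delicate point is that $B(u_h) = \bar\kappa^2\sinh(u_h+G)$ involves $G$, which blows up near the charge centers $x_i$; however, $\bar\kappa$ vanishes on $\Omega_m$, and the estimator only sums $B(u_h)$ over all $\tau$ (with $B$ supported in $\Omega_s$) while ${\rm osc}_\tau$ is summed only over $\tau\in\mathcal T_h\cap\Omega_s$, where $G\in C^\infty$ and $\nabla G$ is bounded --- so the singularity never enters the estimate. The genuinely nonroutine step is controlling the nonlinear part of the residual: one must show $\|B(u_h)\|_{0,\tau}$ is finite and that it can legitimately appear as the element-residual term. This is where the discrete a priori $L^\infty$-bound $\|u_h\|_\infty \le C$ (Theorem~\ref{theorem6.3}, using~(A1)) is essential, since it makes $\|\sinh(u_h+G)\|_{0,\tau\cap\Omega_s}$ bounded uniformly; without it the nonlinearity could not be absorbed. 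So I expect the main obstacle to be not the geometry of the residual decomposition but rather invoking the $L^\infty$ machinery of Sections~\ref{sec5}--\ref{sec6} at exactly the right place to tame $\sinh$, and carefully tracking that all singular contributions sit in regions where $\bar\kappa=0$ or where the oscillation sum is not taken.
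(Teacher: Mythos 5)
Your overall plan is the same as the paper's: invoke the Verf\"urth framework for quasilinear residual-based a posteriori estimation (the paper sets $\underline{a}(x,u,\nabla u)=\varepsilon\nabla u +(\varepsilon-\varepsilon_m)\nabla G$ and $b(x,u,\nabla u)=-\bar\kappa^2\sinh(u+G)$ and applies the general upper-bound theorem), and the mechanism — coercivity plus monotonicity of $B$, Galerkin orthogonality, Cl\'ement interpolation, elementwise integration by parts, the $L^\infty$-estimates to tame $\cosh$ — is correctly identified.

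There is, however, one concrete misstatement in how you account for the oscillation term. You write that the volume residual ``reduces to $-B(u_h)$ plus the divergence of the known smooth field $(\varepsilon-\varepsilon_m)\nabla G$; the latter is the source of the oscillation term.'' But under assumption~(A2) that divergence is \emph{identically zero} elementwise, and this vanishing is one of the three explicit facts the paper's proof uses: on $\tau\subset\Omega_m$ the factor $\varepsilon-\varepsilon_m$ is zero, and on $\tau\subset\Omega_s$ the piecewise-constant coefficient gives $\nabla\cdot((\varepsilon-\varepsilon_m)\nabla G)=(\varepsilon_s-\varepsilon_m)\Delta G=0$ since $G$ is harmonic away from the charges $\{x_i\}\subset\Omega_m$. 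So there is nothing there to average, and the oscillation cannot originate from that term. It in fact originates from the nonlinear residual $B(u_h)=\bar\kappa^2\sinh(u_h+G)$: this is not a piecewise polynomial, and to invoke the Verf\"urth upper-bound machinery (or to compare with a computable piecewise-polynomial residual) one replaces $\sinh(u_h+G)$ by its elementwise constant proxy $\sinh(\bar u_h+\bar G)$. The mean value theorem together with the a priori $L^\infty$-bounds on $u_h$ and $u$ (Theorems~\ref{theorem6.3}/\ref{theorem6.4} and Theorem~\ref{th:max}) gives $|\cosh(\xi)|\le C$, and then Poincar\'e on $\tau$ yields
$\|\sinh(u_h+G)-\sinh(\bar u_h+\bar G)\|_{0,\tau}\lesssim h_\tau\bigl(\|\nabla u_h\|_{0,\tau}+\|\nabla G\|_{0,\tau}\bigr)$,
which after the standard $h_\tau$ residual weighting produces exactly the $h_\tau^2(\|\nabla u_h\|+\|\nabla G\|)$ form of ${\rm osc}_\tau$ in~\eqref{eqn:oscillation}, restricted to $\tau\subset\Omega_s$ because $\bar\kappa$ vanishes on $\Omega_m$. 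Your write-up actually lists both $\|\nabla u_h\|_{0,\tau}$ and $\|\nabla G\|_{0,\tau}$ in the conclusion, so I suspect you intuited the right answer; but the causal chain as stated is broken, and an attempt to carry it through as written would either collapse (the alleged source vanishes) or fail to explain the $\|\nabla u_h\|$ contribution. Fixing this amounts to moving the averaging/MVT step from the $G$-flux term to the nonlinear $B(u_h)$ term, which is precisely where you already noted the $L^\infty$-estimates must enter.

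Everything else — the role of~(A2), the face jump term picking up the $[\varepsilon]\partial G/\partial n$ contribution at the discrete interface, the observation that $\bar\kappa=0$ on $\Omega_m$ keeps the singularity of $G$ out of $B(u_h)$, and the restriction of the oscillation sum to $\Omega_s$ — matches the paper's argument.
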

\unskip

\emph{Proof.} 
We shall apply the general estimate in 
\cite[Chapter 2]{Verfurth.R1996} (see also~\cite{Verfurth.R1994}) to 
$$
\underline{a}(x,u,\nabla u)=\varepsilon \nabla u+(\varepsilon
-\varepsilon _m)\nabla G, \quad \text{and}\quad b(x,u,\nabla
u)=-\bar{\kappa} ^2\sinh(u+G).
$$ 
We then use the following facts to get the desired result:
\begin{itemize}
\item $\nabla\cdot (\varepsilon\nabla u_h)~|_{\tau}=0\ \forall \tau
\in \mathcal T_h$ by the assumption (A2) of the grid;

\item $\nabla \cdot ((\varepsilon-\varepsilon _m)\nabla G)
 ~|_{\tau}=0\ \forall \tau \in \mathcal T_h$ since $\Delta G(x)=0$
 if $x\notin \{x_i\}$.

\item For $\tau \in \mathcal T_h\cap \Omega _s$, let $\bar u_h$ and 
$\bar G$ denote the average of $u_h$ and $G$ over $\tau$, 
respectively. We then have
\begin{eqnarray*}
\|\sinh (u_h+G)-\sinh (\bar u_h + \bar G)\|_{0,\tau}&\leq &|\cosh
(\xi)|\|u_h-\bar u_h + G- \bar G\|_{0,\tau}\\ &\leq
&Ch_{\tau}^2(\|\nabla u_h\|_{0,\tau}+\|\nabla G\|_{0,\tau}).
\end{eqnarray*}
Here we use the $L^{\infty}$-estimates of $u$ and $u_h$ to conclude
that $|\cosh (\xi)|\leq C$ and the standard error estimate for 
$\|u_h-\bar u_h\|_{0,\tau}$ and $\|G-\bar G\|_{0,\tau}$.\qquad\endproof
\end{itemize}

We give some remarks on our error estimator and the oscillation 
term. First, using (\ref{corcevity}) one can easily show that $\|\nabla
u_h\|_{0,\Omega}\leq C$ uniformly with respect to $h$ and thus ${\rm 
  osc} _{\tau}=O(h^2_{\tau})$. Comparing to the order of $\eta 
_{\tau}=O(h_{\tau})$, the error estimator $\eta _{\tau}$ will dominate 
in the upper bound. Second, in (\ref{eqn:estimator}) the jump of
$\left [ n_S \cdot (\varepsilon- \varepsilon _m)\nabla G\right]_S\neq 
0$ only if $S\in \Gamma _h$. This additional term with order 
$O([\varepsilon])$ will emphasize the elements around the interface
where the refinement most occurs.

Although it is clear that the upper bound is the key to bounding the 
error, the lower bound can also be quite useful; it can help to ensure 
that the adaptive procedure does not do too much work by overrefining
an area where it is unnecessary. Again using the general framework for 
the  a posteriori  error estimate in
\cite{Verfurth.R1994,Verfurth.R1996}, we have the following lower 
bound result.

\begin{theorem}\label{theorem7.2}
There exists two constants $C_3, C_4$  depending only on the shape
regularity of $\mathcal T_h$ such that
$$
\eta _{\tau}^2(u_h)\leq C_3\|u-u_h\|_{1,\omega _{\tau}}^2+C_4\sum
_{\tilde \tau\in \omega _{\tau}\cap \Omega _s}{\rm osc}^2_{\tilde
 \tau}(u_h) \quad \forall \tau \in \mathcal T_h.
$$
\end{theorem}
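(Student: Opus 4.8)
The plan is to establish the local lower bound (efficiency estimate) by the standard Verf\"urth-style bubble-function technique, applied to the nonlinear residual. First I would set up notation: let $R_{\tau} := B(u_h)\big|_{\tau}$ be the interior residual on a simplex $\tau$ (recalling that $\nabla\cdot(\varepsilon\nabla u_h)=0$ and $\nabla\cdot((\varepsilon-\varepsilon_m)\nabla G)=0$ on each element by assumptions (A1)--(A2) and $\Delta G=0$ off $\{x_i\}$), and let $J_S := \left[ n_S\cdot(\varepsilon\nabla u_h + (\varepsilon-\varepsilon_m)\nabla G)\right]_S$ be the jump across an interior face $S\in\mathcal I(\tau)$, so that $\eta_{\tau}^2(u_h) = h_{\tau}^2\|R_{\tau}\|_{0,\tau}^2 + \tfrac12\sum_{S\in\mathcal I(\tau)} h_S\|J_S\|_{0,S}^2$. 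The error $e := u - u_h$ satisfies the weak residual equation $A(e,v) + (B(u)-B(u_h),v) = 0$ for all $v\in H_0^1(\Omega)$, equivalently, testing the exact equation against $v$ and integrating by parts elementwise, $\sum_{\tau}\int_{\tau} R_{\tau}\,v + \sum_{S}\int_S J_S\, v = \int_\Omega \varepsilon\nabla e\cdot\nabla v + (B(u)-B(u_h),v)$ for $v\in H_0^1(\Omega)$ vanishing appropriately.

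Next I would handle the interior residual term. Let $b_{\tau}$ be the standard interior bubble function on $\tau$ (a product of barycentric coordinates, scaled so $\|b_\tau\|_{\infty}=1$, supported in $\tau$), and on $\tau\cap\Omega_s$ replace $R_{\tau}$ by a polynomial approximation $\bar R_{\tau}$ (e.g. using the averaged arguments $\bar u_h,\bar G$ as in the proof of Theorem~\ref{theorem7.1}); the difference is controlled by $\mathrm{osc}_{\tau}$. Testing with $v = b_{\tau}\bar R_{\tau}$ and using the norm equivalence $\|b_{\tau}^{1/2}p\|_{0,\tau}\simeq\|p\|_{0,\tau}$ on polynomials together with the inverse inequality $\|\nabla(b_{\tau}p)\|_{0,\tau}\lesssim h_{\tau}^{-1}\|p\|_{0,\tau}$, one obtains
$$
\|\bar R_{\tau}\|_{0,\tau}^2 \lesssim \int_{\tau} R_{\tau}\, b_{\tau}\bar R_{\tau} + (\text{osc terms}) = \int_{\tau}\varepsilon\nabla e\cdot\nabla(b_{\tau}\bar R_{\tau}) + (B(u)-B(u_h), b_{\tau}\bar R_{\tau}) + (\text{osc}).
$$
Using Cauchy--Schwarz, the boundedness of $B$ from Lemma~\ref{lemma6.1}(3) (valid since $u,u_h\in L^{\infty}$ by Theorems~\ref{th:max} and~\ref{theorem6.3}/\ref{theorem6.4}), and the inverse inequality, the right side is bounded by $(\|\nabla e\|_{0,\tau} + \|e\|_{0,\tau})\,h_{\tau}^{-1}\|\bar R_{\tau}\|_{0,\tau} + (\text{osc})$, which after dividing gives $h_{\tau}\|\bar R_{\tau}\|_{0,\tau}\lesssim \|e\|_{1,\tau} + h_{\tau}\cdot(\text{osc})$; folding back the approximation error yields $h_{\tau}\|R_{\tau}\|_{0,\tau}\lesssim \|e\|_{1,\tau} + C\,\mathrm{osc}_{\tau}(u_h)$.

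For the jump term I would use the face bubble function $b_S$ (supported on $\omega_S$, the two simplices sharing $S$), extend $J_S$ to a polynomial on $\omega_S$, and test with $v = b_S J_S^{\mathrm{ext}}$. Integration by parts over the two elements of $\omega_S$ produces $\int_S J_S\, v$ plus interior-residual contributions already controlled by the previous step; the scaling relations $\|b_S^{1/2}p\|_{0,S}\simeq\|p\|_{0,S}$, $\|b_S J_S^{\mathrm{ext}}\|_{0,\omega_S}\lesssim h_S^{1/2}\|J_S\|_{0,S}$, and $\|\nabla(b_S J_S^{\mathrm{ext}})\|_{0,\omega_S}\lesssim h_S^{-1/2}\|J_S\|_{0,S}$ then give $h_S\|J_S\|_{0,S}^2 \lesssim \|e\|_{1,\omega_S}^2 + \sum_{\tilde\tau\subset\omega_S} h_{\tilde\tau}^2\|R_{\tilde\tau}\|_{0,\tilde\tau}^2$, and combining with the interior estimate, $h_S\|J_S\|_{0,S}^2\lesssim \|e\|_{1,\omega_S}^2 + \sum_{\tilde\tau\in\omega_\tau\cap\Omega_s}\mathrm{osc}_{\tilde\tau}^2(u_h)$. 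Summing the interior estimate over $\tau$ and the face estimates over $S\in\mathcal I(\tau)$, and absorbing all patches into $\omega_{\tau}$, yields the claimed bound $\eta_{\tau}^2(u_h)\le C_3\|u-u_h\|_{1,\omega_\tau}^2 + C_4\sum_{\tilde\tau\in\omega_\tau\cap\Omega_s}\mathrm{osc}_{\tilde\tau}^2(u_h)$, with $C_3,C_4$ depending only on shape regularity through the bubble-function and inverse-inequality constants. The main obstacle is the careful treatment of the nonlinear term $(B(u)-B(u_h),v)$: one must verify that Lemma~\ref{lemma6.1}(3) applies (which requires the a priori $L^\infty$-bounds on both $u$ and $u_h$, and hence the grid assumption (A1) that guarantees Theorem~\ref{theorem6.3}/\ref{theorem6.4}), and that the resulting $\|e\|_{0}$ contribution is correctly subsumed into $\|e\|_{1}$ on the relevant patch rather than producing a spurious lower-order term; everything else is the routine Verf\"urth bubble machinery.
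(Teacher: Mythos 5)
Your proposal is correct and takes essentially the same approach as the paper: the paper's Theorem~\ref{theorem7.2} is stated without a detailed proof, citing only the general Verf\"urth framework from~\cite{Verfurth.R1994,Verfurth.R1996}, and your bubble-function argument (interior and face bubbles, polynomial approximation of $B(u_h)$ producing the oscillation term, and reliance on the $L^\infty$-bounds of $u$ and $u_h$ to control the linearized nonlinearity via Lemma~\ref{lemma6.1}(3)) is precisely the content of that cited machinery specialized to the RPBE. You correctly identify the one nonroutine point, namely that the $L^\infty$-estimates of Theorems~\ref{th:max} and~\ref{theorem6.3}/\ref{theorem6.4} are what make the Lipschitz bound on $B$ available locally.
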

\unskip

\subsection{Marking and refinement strategy}\label{sec7.2}
Given an initial triangulation $\mathcal T_0$, we shall generate a 
sequence of nested conforming triangulations $\mathcal T_k$ using the 
following loop:
\begin{equation}\label{keyloop}
\hbox {\bf \small SOLVE } \rightarrow \hbox{ \bf \small ESTIMATE}
\rightarrow \hbox{ \bf \small MARK } \rightarrow \hbox{ \bf \small
 REFINE}.
\end{equation}
More precisely to get $\mathcal T_{k+1}$ from $\mathcal T_k$ we first 
solve the discrete equation to get $u_k$ on $\mathcal T_k$. The error 
is estimated using $u_k$ and used to mark a set of triangles that 
are to be refined. Triangles are refined in such a way that the 
triangulation is still shape-regular and conforming.

We have discussed the step {\bf \small ESTIMATE} in detail, and we
shall not discuss the step {\bf \small SOLVE}, which deserves a
separate investigation. We assume that the solutions of the 
finite-dimensional problems can be solved to any accuracy 
efficiently. Examples of such optimal solvers are the multigrid method or the
multigrid-based preconditioned conjugate gradient method 
\cite{Xu.J1992a,Bramble.J;Zhang.X2000,Hackbusch.W1985,Xu.J;Zikatanov.L2002}. 
In particular we refer to 
\cite{Aksoylu.B;Bond.S;Holst.M2003,Aksoylu.B;Holst.M2006} for recent 
work on adaptive grids in three dimensions and
\cite{Holst.M;Said.F1995,Holst.M1994} for solving the PBE with inexact
Newton methods.

We now present the marking strategy which is crucial for our adaptive 
methods. We shall focus on one iteration of loop (\ref{keyloop}) and 
thus use $\mathcal T_H$ for the coarse mesh and $\mathcal T_h$ for the 
refined mesh. Quantities related to those meshes will be distinguished 
by a subscript $H$ or $h$, respectively.

Let $\theta _i, i=1,2$ be two numbers in $(0,1)$.
\begin{enumerate}
\item Mark $\mathcal M_{1,H}$ such that
$$
\sum _{\tau \in \mathcal M_{1,H}}\eta _{\tau}^2(u_H)\geq \theta
 _1\sum _{\tau \in \mathcal T_{H}}\eta _{\tau}^2(u_H).
$$
\item If
\begin{equation}\label{small}
{\rm osc}_H\geq \eta _H
\end{equation}
or
\begin{equation}\label{small2}
C_4\sum _{\tilde \tau \in \cup _{\tau \in M_H}\omega _{\tau}}{\rm
 osc}_{\tau}^2(u_H)\geq \frac{1}{2}\sum _{\tau \in \mathcal M_H}\eta _\tau^2(u_H),
\end{equation}
then extend $\mathcal M_{1,H}$ to $\mathcal M_{2,H}$ such that
$$
\sum _{\tau \in \mathcal M_{2,H}}{\rm osc} _{\tau}^2(u_H)\geq
\theta _2\sum _{\tau \in \mathcal T_{H}}{\rm osc}_{\tau}^2(u_H).
$$
\end{enumerate}
Unlike the marking strategy for reducing oscillation in the adaptive 
finite element methods in 
\cite{Morin.P;Nochetto.R;Siebert.K2000,Morin.P;Nochetto.R;Siebert.K2002}, 
in the second step, we put a switch (\ref{small})--(\ref{small2}). In 
our setting, the oscillation ${\rm osc}_H=O(H^2)$ is in general a high-order term. The marking step (2) is seldom applied.

In the {\bf \small REFINE} step, we need to carefully choose the rule 
for dividing the marked triangles such that the mesh obtained by this 
dividing rule is still conforming and shape-regular. Such refinement
rules include red and green refinement 
\cite{Bank.R;Sherman.A;Weiser.A1983}, longest refinement 
\cite{Rivara.M1984,Rivara.M1984a}, and newest vertex bisection
\cite{Sewell.E1972, Mitchell.W1988, Mitchell.W1989}. For the {\bf \small REFINE} step, we are going to
impose the following assumptions.

(A3) Each $\tau \in \mathcal M_H$, as well as each of
its faces, contains a node of $\mathcal T_h$ in its interior.

(A4) Let $\mathcal T_h$ be a refinement of $\mathcal T_H$ such
that the corresponding finite element spaces are nested,
i.e., $V^H\subset V^h$.

With those assumptions, we can have the discrete lower bound between 
two nested grids. Let $\mathcal T_H$ be a shape-regular triangulation,
and let $\mathcal T_h$ be a refinement of $\mathcal T_H$ obtained by 
local refinement of marked elements set $\mathcal M_H$. The assumption 
(A3) is known as the  {\it interior nodes property} in
\cite{Morin.P;Nochetto.R;Siebert.K2002}. Such a requirement ensures 
that the refined finite element space $V^h$ is fine enough to capture 
the difference of solutions.

\begin{theorem}\label{th:dislower}
Let $\mathcal T_H$ be a shape-regular triangulation, and let
$\mathcal T_h$ be a refinement of $\mathcal T_H$ obtained by some 
local refinement methods of marked elements set $\mathcal M_H$, such 
that it satisfies assumptions {\rm (A3)} and {\rm (A4)}. Then there
exist two constants, depending only on the shape regularity of
$\mathcal T_H$, such that
\begin{equation}\label{lowerbd}
\eta _{\tau}^2(u_H)\leq C_3\|u_h-u_H\|^2_{1,\omega _{\tau}}+C_4 \sum
_{\tilde \tau \in \omega _{\tau}}{\rm osc}^2_{\tilde \tau}(u_H)\quad
\forall \tau \in \mathcal M_H.
\end{equation}
\end{theorem}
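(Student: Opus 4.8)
The plan is to run the standard residual-based \emph{lower} bound argument (the discrete counterpart of Theorem~\ref{theorem7.2}), but with the continuous bubble functions in $H_0^1(\Omega)$ replaced by \emph{discrete} bubble functions belonging to $V_0^h$; this is exactly where the interior-node hypothesis (A3) enters, while the nestedness (A4) supplies the Galerkin-type relation below. Since $u_h$ solves \eqref{eqn:gal_disc} over $V_0^h$ (with $\mathcal T_h$) while $u_H$ solves the analogous problem over $V_0^H$, and the $f_G$-terms cancel, for every $v_h\in V_0^h$ one has the \emph{discrete residual identity}
\begin{align*}
\langle F_H,v_h\rangle &:= A(u_H,v_h)+(B(u_H),v_h)+\langle f_G,v_h\rangle \\
&= A(u_H-u_h,v_h)+(B(u_H)-B(u_h),v_h).
\end{align*}
Combining the continuity of $A$ (Lemma~\ref{lemma6.1}.1) with the $L^\infty$-boundedness of $B$ (Lemma~\ref{lemma6.1}.3), which is legitimate because $u_H$ and $u_h$ are uniformly bounded in $L^\infty$ (Theorems~\ref{theorem6.3}--\ref{theorem6.4}; recall (A2) is in force throughout this section), yields the master estimate $|\langle F_H,v_h\rangle|\lesssim \|u_h-u_H\|_{1,\,\mathrm{supp}\,v_h}\,\|v_h\|_{1,\,\mathrm{supp}\,v_h}$ for all $v_h\in V_0^h$.

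Next I would localize to a fixed $\tau\in\mathcal M_H$. Because each $u_H|_{\tilde\tau}$ is affine, $\varepsilon$ and $\bar\kappa$ are elementwise constant on $\mathcal T_H$ by (A2), and $\Delta G=0$ away from the charges $\{x_i\}$, which lie strictly inside $\Omega_m$ where $\varepsilon-\varepsilon_m$ vanishes, elementwise integration by parts of $\langle F_H,v_h\rangle$ leaves only (i) the interior residuals $B(u_H)|_{\tilde\tau}=\bar\kappa^2\sinh(u_H+G)$ and (ii) the interior-face jumps $\big[\,n_S\cdot(\varepsilon\nabla u_H+(\varepsilon-\varepsilon_m)\nabla G)\,\big]_S$ --- exactly the two ingredients of $\eta_\tau(u_H)$ in \eqref{eqn:estimator}, boundary-face terms dropping since $v_h\in V_0^h$. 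To extract the element term, take $v_h$ to be a discrete interior bubble on $\tau$, supported in $\tau$ and nonzero at the interior node of $\mathcal T_h$ furnished by (A3), scaled by the $L^2(\tau)$-average $\overline{B(u_H)}$; the usual bubble-function norm equivalences give $h_\tau^2\|\overline{B(u_H)}\|_{0,\tau}^2\lesssim h_\tau^2\langle F_H,v_h\rangle$ while $\|v_h\|_{1,\tau}\lesssim h_\tau^{-1}\|v_h\|_{0,\tau}\lesssim h_\tau^{-1}\|\overline{B(u_H)}\|_{0,\tau}$, so the master estimate closes this term against $\|u_h-u_H\|_{1,\tau}^2$. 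Replacing $\overline{B(u_H)}$ by $B(u_H)$ costs $h_\tau^2\|B(u_H)-\overline{B(u_H)}\|_{0,\tau}^2$, which by the mean value theorem and the uniform bound on $\cosh(u_H+G)$ in $\Omega_s$ (again Theorems~\ref{theorem6.3}--\ref{theorem6.4}) is $\lesssim h_\tau^4(\|\nabla u_H\|_{0,\tau}^2+\|\nabla G\|_{0,\tau}^2)=\mathrm{osc}_\tau^2(u_H)$. For each interior face $S\subset\tau$ I would use the analogous discrete face bubble supported in $\omega_S\subset\omega_\tau$ --- again available thanks to the interior node on $S$ from (A3) --- scaled to reproduce the averaged jump on $S$; testing $\langle F_H,\cdot\rangle$ with it isolates $\int_S[\,\cdot\,]_S$ plus interior-residual contributions on the two elements of $\omega_S$ already controlled above, and the same bookkeeping bounds $h_S\|[\,\cdot\,]_S\|_{0,S}^2$ by $\|u_h-u_H\|_{1,\omega_S}^2$ plus oscillation over $\omega_S$. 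Summing the finitely many (by shape regularity) element and face contributions attached to $\tau$ yields \eqref{lowerbd} with constants depending only on the shape regularity of $\mathcal T_H$.

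The main obstacle, and the reason (A3) and (A4) are needed at all, is the passage from the classical continuous bubble-function machinery to a genuinely discrete one: the test functions must lie in $V_0^h$, so one cannot simply multiply the residuals by the polynomial bubbles supported on a single coarse element. Instead one must verify, with constants controlled purely by the shape regularity of the refined mesh near $\tau$ (whose local geometry depends on the chosen refinement rule --- red-green, longest-edge, or newest-vertex bisection), the quantitative estimates $\|v_h\|_{0,\tau}^2\gtrsim h_\tau^d\,|v_h(p_\tau)|^2$ at the interior node $p_\tau$ and $\langle \overline{B(u_H)},v_h\rangle\gtrsim h_\tau^d\,|\overline{B(u_H)}|\,|v_h(p_\tau)|$, together with their face analogues on $\omega_S$. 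Once these are established the remaining steps are the routine Verf\"urth-style bookkeeping, powered by the already-proved uniform $L^\infty$ estimates that keep $\cosh(u_H+G)$ bounded on $\Omega_s$.
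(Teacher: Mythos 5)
Your proposal is correct and matches the paper's approach: the paper's proof consists of the single remark that the result follows by the standard argument using discrete bubble functions on each $\tau$ and each face $S\in\partial\tau$, which is precisely the machinery you develop, with (A3) supplying the interior nodes needed to build the discrete bubbles in $V^h_0$, (A4) giving the Galerkin-type residual identity, and the uniform $L^\infty$ bounds from Theorems~\ref{theorem6.3}--\ref{theorem6.4} controlling the nonlinear terms and producing the oscillation contributions.
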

\unskip

\begin{proof}
The proof is standard using the discrete bubble functions on $\tau$ 
and each face $S\in \partial \tau$.%
\qquad\end{proof}

\subsection{Convergence analysis}\label{sec7.3}
We shall prove that the repeating of loop (\ref{keyloop}) will produce a
convergent solution $u_k$ to $u$. The convergent analysis of the adaptive
finite element method is an active topic. In the literature it is mainly
restricted to the linear 
equations~\cite{Chen.L;Holst.M;Xu.J2006,Stevenson.R2005a,Carstensen.C;Hoppe.R2005a,Morin.P;Nochetto.R;Siebert.K2000,Dorfler.W1996,Binev.P;Dahmen.W;DeVore.R2004,Morin.P;Nochetto.R;Siebert.K2002,Mekchay.K;Nochetto.R2005,Dorfler.W;Wilderotter.O2000,Bansch.E;Morin.P;Nochetto.R2002}. 
The convergence analysis for the nonlinear equation is relatively 
rare~\cite{Dorfler.W1995,Veeser.A2002,Carstensen.C2006}.

\begin{lemma}\label{lemma7.4}
Let $\mathcal T_H$ and $\mathcal T_h$ satisfy assumptions 
{\rm (A1)--(A4)}. Then there exist two constants  depending only on
the shape regularity of $\mathcal T_H$ such that
$$
\|u-u_H\|_1^2\leq C_5\|u_h-u_H\|_1^2+ C_6\, {\rm osc}_H^2.
$$
When {\rm (\ref{small})} and {\rm (\ref{small2})} do not hold, we 
have a stronger inequality
$$
\|u-u_H\|_1^2\leq C_7\, \|u_h-u_H\|_1^2,
$$
where $C_7$ depends only on the shape regularity of $\mathcal T_H$.
\end{lemma}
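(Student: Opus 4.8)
The plan is to run the localization argument behind D\"orfler-type convergence proofs (in the spirit of Morin--Nochetto--Siebert): combine the a~posteriori \emph{upper} bound of Theorem~\ref{theorem7.1} with the \emph{discrete lower} bound of Theorem~\ref{th:dislower}, using the D\"orfler-style marking of the {\bf MARK} step as the bridge that moves control of the \emph{global} estimator onto the \emph{marked} set $\mathcal M_H$. Shape regularity of $\mathcal T_H$ enters only through a uniform bound $N$ on the number of patches $\omega_\tau$ that can contain a given simplex.

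First I would apply Theorem~\ref{theorem7.1} (its hypotheses (A1)--(A2) are part of (A1)--(A4)): $\|u-u_H\|_1^2 \le C_1\eta_H^2 + C_2\,\text{osc}_H^2$, so the whole task reduces to estimating $\eta_H^2 = \sum_{\tau\in\mathcal T_H}\eta_\tau^2(u_H)$. Since $\mathcal M_{1,H}\subseteq\mathcal M_H$ was marked so that $\sum_{\tau\in\mathcal M_{1,H}}\eta_\tau^2(u_H)\ge\theta_1\sum_{\tau\in\mathcal T_H}\eta_\tau^2(u_H)$, we have $\eta_H^2\le\theta_1^{-1}\sum_{\tau\in\mathcal M_H}\eta_\tau^2(u_H)$. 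Applying Theorem~\ref{th:dislower} elementwise on $\mathcal M_H$ --- admissible precisely because the {\bf REFINE} step satisfies (A3) (interior nodes) and (A4) (nestedness) --- and summing, the finite overlap of the $\omega_\tau$ yields
$$
\sum_{\tau\in\mathcal M_H}\eta_\tau^2(u_H)\;\le\;N C_3\,\|u_h-u_H\|_1^2 + N C_4\sum_{\tilde\tau\in\bigcup_{\tau\in\mathcal M_H}\omega_\tau}\text{osc}^2_{\tilde\tau}(u_H),
$$
and the last sum is $\le\text{osc}_H^2$ up to a shape-regularity factor (the cutoff structure $\bar\kappa\equiv 0$, $\varepsilon\equiv\varepsilon_m$ on $\Omega_m$ renders the contributions near the charges harmless). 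Chaining these estimates gives the first inequality, e.g.\ with $C_5=C_1 N C_3/\theta_1$ and $C_6=C_1 N C_4/\theta_1+C_2$.

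For the sharper bound I would use that neither (\ref{small}) nor (\ref{small2}) holds. Reading the constant in (\ref{small2}) as the overlap-adjusted $N C_4$ from the previous display, the failure of (\ref{small2}) says $N C_4\sum_{\tilde\tau\in\bigcup_{\tau\in\mathcal M_H}\omega_\tau}\text{osc}^2_{\tilde\tau}(u_H)<\tfrac12\sum_{\tau\in\mathcal M_H}\eta_\tau^2(u_H)$; substituting this into the summed discrete lower bound absorbs the oscillation term into the left-hand side and leaves $\sum_{\tau\in\mathcal M_H}\eta_\tau^2(u_H)\le 2N C_3\,\|u_h-u_H\|_1^2$, hence $\eta_H^2\le 2N C_3\theta_1^{-1}\|u_h-u_H\|_1^2$. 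The failure of (\ref{small}) gives $\text{osc}_H^2<\eta_H^2$, so Theorem~\ref{theorem7.1} collapses to $\|u-u_H\|_1^2\le(C_1+C_2)\eta_H^2$, and the claim follows with $C_7=2(C_1+C_2)N C_3/\theta_1$.

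I expect the one genuinely delicate point to be matching the constant in the switch (\ref{small2}) with what comes out of Theorem~\ref{th:dislower} once the patch sums are collapsed: the absorption step above goes through cleanly only if (\ref{small2}) is phrased with exactly that overlap-adjusted constant, which I would make explicit in its statement. Everything else --- overlap counting, carrying the constants $C_1,\dots,C_4$, and checking that the oscillation over $\Omega_m$ does not interfere --- is routine bookkeeping, and no new analytic ingredient is needed beyond Theorems~\ref{theorem7.1} and~\ref{th:dislower} (whose proofs already absorb the $L^\infty$ bounds from Sections~\ref{sec5}--\ref{sec6}).
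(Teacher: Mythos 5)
Your proof follows essentially the same route as the paper's: chain the upper bound of Theorem~\ref{theorem7.1} with the D\"orfler marking of $\mathcal M_{1,H}$ and the discrete lower bound of Theorem~\ref{th:dislower}, then for the sharper estimate use the failure of \pref{small} and \pref{small2} to absorb the oscillation term. Your explicit bookkeeping of the overlap constant $N$ and of the matching of that constant against the switch \pref{small2} is a fair point that the paper leaves implicit (and in fact the paper's stated $C_5=C_1\theta_1^{-1}C_3^{-1}$, $C_6=C_2+2C_3^{-1}C_4$ appear to carry a typographical $C_3^{-1}$ where $C_3$ belongs, so your version of the constants is the cleaner one).
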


\begin{proof} 
By the upper bound and marking strategy
\begin{eqnarray*}
\|u-u_H\|_1^2 &\leq & C_1\eta _{H}^2 + C_2{\rm osc}_{H}^2\\ &\leq &
C_1\theta _1^{-1}\sum _{\tau\in \mathcal M_{1,H}}\eta _{\tau}^2(u_H) +
C_2\, {\rm osc}_{H}^2\\ &\leq & C_5\|u_h-u_H\|_1^2+C_6\, {\rm osc}_H^2,
\end{eqnarray*}
with
$$
C_5=C_1\theta _1^{-1}C_3^{-1}, \quad \text{ and }\quad
C_6=(C_2+2C_3^{-1}C_4).
$$
If (\ref{small}) does not hold, i.e., ${\rm osc}_{H}\leq \eta _H$,
the first inequality becomes
$$
\|u-u_H\|_1^2\leq (C_1+C_2)\eta _{H}^2.
$$
If (\ref{small2}) does not hold, we can easily modify the lower 
bound (\ref{lowerbd}) as
$$
\sum _{\tau \in \mathcal M_{1,H}}\eta _{\tau}^2(u_H)\leq
2C_3\|u_h-u_H\|_{1}^2.
$$
Then the inequality follows similarly.%
\qquad\end{proof}

For $\tau _h\subset \tau _H$, let $h_{\tau _h}=\gamma H_{\tau _H}$,
with $\gamma \in (0,1)$. The next lemma shows that even the 
oscillation is not small; there is also a reduction result. For the
marked set $\mathcal M_{H}\subset \mathcal T_H$, we shall use 
$\overline{\mathcal M_{H}}$ to denoted the refined elements in 
$\mathcal T_h$.

\begin{lemma}\label{lemma7.5}
If $\mathcal M_{2,H}\backslash \mathcal M_{1,H}\notin \varnothing$, 
there exist $\rho _1,\rho _2$ such that
$$
{\rm osc}_h^2\leq \rho _1 \, {\rm osc}_H^2 + \rho_2\|u_h-u_H\|_1^2.
$$
\end{lemma}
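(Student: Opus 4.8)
The goal is to compare the oscillation sum over the fine mesh $\mathcal T_h$ with that over the coarse mesh $\mathcal T_H$, plus a controllable error term. Recall that ${\rm osc}^2_\tau(u_h) = h_\tau^4(\|\nabla u_h\|^2_{0,\tau} + \|\nabla G\|^2_{0,\tau})$ and that the oscillation sum runs only over simplices in $\Omega_s$. The plan is to split $\mathcal T_h$ into the refined elements $\overline{\mathcal M_H}$ (those obtained by subdividing marked coarse elements) and the untouched elements $\mathcal T_h \cap \mathcal T_H$. On the untouched elements nothing changes in the mesh size, so $h_\tau = H_\tau$ there and the only difference between ${\rm osc}^2$ evaluated at $u_h$ versus $u_H$ comes from replacing $u_H$ by $u_h$; I would absorb this into the $\|u_h - u_H\|_1^2$ term using the triangle inequality $\|\nabla u_h\|_{0,\tau} \le \|\nabla u_H\|_{0,\tau} + \|\nabla(u_h-u_H)\|_{0,\tau}$ together with $(a+b)^2 \le 2a^2 + 2b^2$ and $h_\tau^4 \le {\rm diam}(\Omega)^2 h_\tau^2$ (to downgrade one factor $h^2$ so the constant is uniform).

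On the refined elements I would use the key geometric gain: each $\tau_h \subset \tau_H$ has $h_{\tau_h} = \gamma H_{\tau_H}$ with $\gamma \in (0,1)$, so $h_{\tau_h}^4 = \gamma^4 H_{\tau_H}^4$, and the number of fine children of a given coarse element is bounded in terms of the shape regularity and the refinement rule. Summing $\|\nabla u_h\|^2_{0,\tau_h}$ over the children of a fixed $\tau_H$ gives $\|\nabla u_h\|^2_{0,\tau_H}$; again compare $u_h$ to $u_H$ on $\tau_H$ via the triangle inequality to produce a $\|\nabla u_H\|^2_{0,\tau_H}$ piece (which feeds the $\rho_1\,{\rm osc}_H^2$ term) and a $\|\nabla(u_h-u_H)\|^2_{0,\tau_H}$ piece (which feeds $\rho_2\|u_h-u_H\|_1^2$). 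For the $\|\nabla G\|^2$ part there is no $u$-dependence at all, so on the refined elements one directly gets a factor $\gamma^4 \le \gamma^2 < 1$ contribution, while on the unrefined elements it is simply reproduced with constant $1$. The hypothesis $\mathcal M_{2,H}\setminus\mathcal M_{1,H}\neq\varnothing$ (and more relevantly the fact that marking for oscillation was triggered, so a definite fraction $\theta_2$ of the coarse oscillation sits in $\mathcal M_H$) is what lets the $\gamma^4$-reduced contribution of the marked elements beat the full contribution on the unrefined ones, yielding $\rho_1 < 1$; at minimum one obtains $\rho_1 < 1$ by choosing constants so that the marked-element reduction dominates, which is the point of the lemma in the subsequent contraction argument.

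Assembling: ${\rm osc}_h^2 = \sum_{\tau_h \in \mathcal T_h \cap \Omega_s} {\rm osc}^2_{\tau_h}(u_h)$, decompose as $\sum_{\text{unrefined}} + \sum_{\text{refined}}$, apply the two estimates above, and collect the $\|\nabla u_H\|^2 + \|\nabla G\|^2$ terms weighted by $H_\tau^4$ (with weight $1$ on unrefined, $\le \gamma^4 \cdot(\text{mult})$ on refined, but the multiplicity times $\gamma^4$ can be kept below $1$ by the refinement rule since each fine child is genuinely smaller and volumes add) into $\rho_1\,{\rm osc}_H^2$, and the $\|\nabla(u_h - u_H)\|^2$ terms (weighted by ${\rm diam}(\Omega)^2 H_\tau^2 \le {\rm diam}(\Omega)^2 \cdot \text{const}$, summed over a finite-overlap cover) into $\rho_2 \|u_h - u_H\|_1^2$.

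\medskip

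\textbf{Main obstacle.} The delicate point is showing $\rho_1 < 1$ rather than merely $\rho_1 = O(1)$: one must verify that, over the \emph{marked} elements, the per-element factor $\gamma^4$ times the bounded number of children (whose total volume equals that of the parent, so the volume factor is exactly $1$, and only the $h^4 = \gamma^4 H^4$ mesh-size factor is gained) strictly reduces the oscillation contribution, and that this reduction on the marked set — which by the marking strategy carries at least a $\theta_2$-fraction of ${\rm osc}_H^2$ — outweighs the unrefined elements which are reproduced with factor $1$. This is exactly the same bookkeeping as in the linear AFEM oscillation-reduction lemmas of Morin–Nochetto–Siebert, adapted to the present ${\rm osc}^2_\tau = h_\tau^4(\|\nabla u_h\|^2 + \|\nabla G\|^2)$; the extra $\|\nabla G\|^2$ term is actually easier since it is $u$-independent and is strictly reduced on every refined element. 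Everything else — the triangle inequalities, $(a+b)^2 \le 2a^2+2b^2$, finite-overlap of patches, and trading one power of $h$ for a ${\rm diam}(\Omega)$ factor — is routine.
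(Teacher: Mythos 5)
Your plan reproduces the paper's argument essentially step for step: use the triangle inequality to replace $u_h$ by $u_H$ in the oscillation at the cost of an $O(h^2)\|\nabla(u_h-u_H)\|^2$ remainder, split $\mathcal T_h$ into the refined children $\overline{\mathcal M}_{2,H}$ and the untouched elements, exploit the geometric reduction $h_{\tau_h}=\gamma H_{\tau_H}$ on the children (whose $L^2$ norms add up exactly to the parent's), and invoke the $\theta_2$-marking to turn the marked-element reduction into $\rho_1<1$. The only cosmetic difference is that you track the sharper exponent $\gamma^4$ from $h_\tau^4$ whereas the paper writes $\gamma^2$; both are $<1$ and the contraction goes through the same way.
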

\unskip

\begin{proof}
\begin{eqnarray*}
{\rm osc}_h^2 &\leq & \sum _{\tau\in \mathcal T_h}{\rm
 osc}_{\tau}^2(u_H) + C\sum _{\tau\in \mathcal
 T_h}(h_{\tau}^4\|\nabla (u_h-u_H)\|^2_{\tau})\\ &\leq & \sum
_{\tau_h \in \overline{\mathcal M}_{2,H}}{\rm osc}_{\tau}^2(u_H) +
\sum _{\tau _h\in \mathcal T_h\backslash \overline{\mathcal
  M}_{2,H}}{\rm osc}_{\tau}^2(u_H) + Ch^2\|\nabla
(u_h-u_H)\|^2\\ &\leq & \gamma ^2\sum _{\tau _H\in \mathcal
 M_{2,H}}{\rm osc}_{\tau}^2(u_H) + \sum _{\tau_H \in \mathcal
 T_H\backslash \mathcal M_{2,H}}{\rm osc}_{\tau}^2(u_H) +
Ch^2\|\nabla (u_h-u_H)\|^2\\ &\leq & {\rm osc}_H^2+(\gamma ^2-1)\sum
_{\tau _H\in \mathcal M_{2,H}}{\rm osc}_{\tau}^2(u_H) + Ch^2\|\nabla
(u_h-u_H)\|^2\\ &\leq & \rho _1 \, {\rm osc}_H^2 + \rho
_2\|u_h-u_H\|^2_1,
\end{eqnarray*}
with $\rho _1 =1-(1-\gamma^2)/\theta _2\in (0,1)$, and $\rho_2=Ch^2$.%
\qquad\end{proof}

We shall choose $\theta _2$ sufficiently close to $1$ and 
$h_{\max}<1/c$ to ensure $\rho _i\in (0,1),i=1,2$. 

For the nonlinear problem, we do not have the orthogonality in $H^1$ 
norms. But we shall use the trivial identity
\begin{equation}\label{energy}
E(u_H)-E(u)=E(u_H)-E(u_h)+E(u_h)-E(u).
\end{equation}
The following lemma proves the equivalence of energy error and error 
in $H^1$ norm. Again the $L^{\infty}$ norm estimate of $u$ and $u_h$ 
is crucial.

\begin{lemma}\label{lm:energynorm}
If both $\mathcal T_h$ and $\mathcal T_H$ satisfy the assumption {\rm (A1)}, 
then
\begin{itemize}
\item $E(u_h)-E(u)\simeq \|u_h-u\|_1^2$;
\item $E(u_H)-E(u)\simeq \|u_H-u\|_1^2$;
\item $E(u_H)-E(u_h)\simeq \|u_H-u_h\|_1^2$.
\end{itemize}
\end{lemma}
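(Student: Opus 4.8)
Each of the three claimed equivalences is an estimate of the form $E(w)-E(v)\simeq\|w-v\|_1^2$ where $v$ is the exact minimizer of $E$ over the relevant affine space ($M$, $V^h_D$, or $V^H_D$) and $w$ is a competitor in that same space. Because all three have identical structure, I would prove a single lemma: if $v$ minimizes $E$ over a convex set $K\subset H^1_D(\Omega)$ with $v,w\in K\cap L^\infty(\Omega)$ and $\|v\|_\infty,\|w\|_\infty\le C$, then $E(w)-E(v)\simeq\|w-v\|_1^2$ with constants depending only on $C$, $\varepsilon$, $\bar\kappa$, $G$. The three bullets then follow by applying this to $(v,w)=(u,u_h)$, $(u,u_H)$, $(u_h,u_H)$, using Theorem~\ref{th:max} for the $L^\infty$ bound on $u$ and Theorems~\ref{theorem6.3}/\ref{theorem6.4} (via assumption (A1), which is hypothesized here) for the uniform $L^\infty$ bounds on $u_h$ and $u_H$ — these are exactly the ``$L^\infty$ estimates of $u$ and $u_h$'' the remark before the lemma flags as crucial.

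\emph{Lower bound (convexity / coercivity).} Since $v$ minimizes $E$ over $K$ and $w\in K$, the first-order condition gives $\langle DE(v),w-v\rangle\ge 0$ (equality in the interior case, but $\ge 0$ suffices). Writing $E(w)-E(v)=\langle DE(v),w-v\rangle+\big(E(w)-E(v)-\langle DE(v),w-v\rangle\big)$, the first term is nonnegative and the remainder is the ``Bregman distance'' of $E$. For the quadratic part $A$ this remainder is exactly $\tfrac12 A(w-v,w-v)$, which by the coercivity in Lemma~\ref{lemma6.1}(1) is $\gtrsim\|w-v\|_1^2$; the $\cosh$ part contributes a nonnegative term by convexity of $\cosh$; the linear functional $\langle f_G,\cdot\rangle$ contributes nothing to the second difference. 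Hence $E(w)-E(v)\gtrsim\|w-v\|_1^2$.

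\emph{Upper bound (Lipschitz gradient on bounded sets).} Here I expand $E(w)-E(v)=\int_0^1\langle DE(v+t(w-v)),w-v\rangle\,dt$, subtract $\langle DE(v),w-v\rangle$ (which is $\le 0$ since $v$ is the minimizer and $w\in K$... actually $=0$ in the relevant affine cases, and $\ge$ the wrong way in general, so I would instead just bound $E(w)-E(v)\le\langle DE(w),w-v\rangle$ using convexity of $E$, then estimate $\langle DE(w),w-v\rangle$ directly). By Lemma~\ref{lemma6.1}: $A(w,w-v)\lesssim\|w\|_1\|w-v\|_1$, the boundedness bound (3) of $B$ gives $(B(w)-B(v_*),w-v)\lesssim\|w-v\|_0\|w-v\|_0$ — wait, one needs $(B(w),w-v)$ against a reference point where $B$ vanishes or is controlled; the clean route is to use that $\langle DE(v),w-v\rangle\ge 0$ so $E(w)-E(v)\le\langle DE(w),w-v\rangle-\langle DE(v),w-v\rangle=A(w-v,w-v)+(B(w)-B(v),w-v)$, and now Lemma~\ref{lemma6.1}(1) bounds the first term by $\lesssim\|w-v\|_1^2$ while Lemma~\ref{lemma6.1}(3), valid precisely because $v,w\in L^\infty$ with uniform bounds, gives $(B(w)-B(v),w-v)\le C\|w-v\|_0^2\lesssim\|w-v\|_1^2$. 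This yields $E(w)-E(v)\lesssim\|w-v\|_1^2$.

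\emph{Main obstacle.} The only real subtlety is making sure the constants in both directions are uniform in $h$ (and $H$), which is why the uniform $L^\infty$ bounds on $u_h,u_H$ — hence assumption (A1) in the lemma's hypothesis — are indispensable: without them the constant $C$ in Lemma~\ref{lemma6.1}(3) (coming from $\|\cosh(\cdot+G)\|_{\infty,\Omega_s}$) would not be controlled, and the $\simeq$ would degenerate as the mesh refines. Everything else is a bookkeeping exercise in convexity and the Poincaré/coercivity inequalities already available.
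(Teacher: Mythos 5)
Your argument is correct and rests on exactly the same ingredients as the paper's proof: the first-order term vanishes (or is one-sided) by minimality of the reference point, the quadratic form $A$ supplies coercivity and continuity, and the uniform $L^\infty$ bounds on $u$, $u_h$, $u_H$ coming from assumption (A1) control the $\cosh$ factor so the constants do not degenerate as the mesh refines. The paper packages this as a single second-order Taylor expansion $E(w)-E(v)=\langle DE(v),w-v\rangle+\tfrac12\langle D^2E(\xi)(w-v),w-v\rangle$ with $\bar\kappa^2\le\|D^2E(\xi)\|\le C$, whereas you split it into a Bregman-distance lower bound and a monotone first-difference upper bound; this is only a difference in bookkeeping, not in substance.
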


\begin{proof}
By the Taylor expansion
$$
E(u_H)-E(u_h)=\langle DE(u_h), u_H-u_h \rangle
+(D^2E(\xi)(u_H-u_h), u_H-u_h).
$$
The first term is zero since $u_h$ is the minimizer. The desired 
result follows from the bound
$$
{\bar \kappa} ^2\leq \|D^2E(\xi)\|_{\infty}={\bar \kappa}^2
\|\cosh(\xi +G)\|_{\infty,\Omega _s}\leq C.
$$
Other inequalities follow from the same line.%
\qquad\end{proof}

Our adaptive finite element methods (AFEMs) consist of the iteration
of loop (\ref{keyloop}) with the estimate, marking, and refinement
parts discussed before. Also the grids generated by the algorithm will
satisfy assumptions (A1)--(A4). Hereafter we replace the subscript $h$ 
by an iteration counter called $k$ and introduce some notation to 
simplify the proof. Let $u_k$ be the solution in the $k$th iteration,
$\delta _k:=E(u_k)-E(u)$, $d_k=E(u_k)-E(u_{k+1})$, and $o_k={\rm osc}^2(u_k)$

\begin{theorem}\label{theorem7.7}
The adaptive method using loop {\rm (\ref{keyloop})} will produce a 
convergent approximation in the sense that
$$
\lim _{k\rightarrow 0}\|u-u_k\|_1=0.
$$
\end{theorem}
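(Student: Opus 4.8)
The plan is to work with the energy error $\delta_k = E(u_k)-E(u)$ rather than the $H^1$ error directly, since Lemma~\ref{lm:energynorm} gives $\delta_k \simeq \|u_k-u\|_1^2$ (the grids produced by the algorithm satisfy (A1)--(A4), so the lemma applies at every level), and likewise $d_k = E(u_k)-E(u_{k+1}) \simeq \|u_{k+1}-u_k\|_1^2$. Thus it suffices to prove $\delta_k \to 0$. First I would record the monotonicity of $\delta_k$: from the identity~(\ref{energy}) we have $\delta_k = d_k + \delta_{k+1}$; since the refinement keeps the finite element spaces nested (assumption (A4)) and $u_{k+1}$ minimizes $E$ over $V^{h_{k+1}}_D \supset V^{h_k}_D$, we get $d_k \ge 0$, and $\delta_k \ge 0$ because $u$ is the global minimizer over $M$. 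Hence $\{\delta_k\}$ is nonincreasing and bounded below, so it converges and $d_k = \delta_k - \delta_{k+1} \to 0$; consequently $\|u_{k+1}-u_k\|_1 \to 0$ as well.

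Next I would split into two cases according to how often the oscillation-marking step~(2) is invoked. \textbf{Case 1:} there are infinitely many iterations $k$ at which neither switch~(\ref{small}) nor~(\ref{small2}) holds, so step~(2) is skipped. For each such $k$ the stronger inequality of Lemma~\ref{lemma7.4} gives $\|u-u_k\|_1^2 \le C_7\|u_{k+1}-u_k\|_1^2$, and combining with Lemma~\ref{lm:energynorm} yields $\delta_k \lesssim d_k$. Since $d_k \to 0$, $\delta_k \to 0$ along this subsequence; by the monotonicity established above this forces $\delta_k \to 0$ for the full sequence, and we are done.

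\textbf{Case 2:} from some index $k_0$ on, at every iteration one of~(\ref{small}),~(\ref{small2}) holds, so step~(2) is carried out and the oscillation reduction of Lemma~\ref{lemma7.5} applies: $o_{k+1} \le \rho_1 o_k + \rho_2\|u_{k+1}-u_k\|_1^2$ with $\rho_1 \in (0,1)$ and $\rho_2 = O(h_{\max}^2)$ (the choice of $\theta_2$ close to $1$ and $h_{\max}$ small, as discussed after Lemma~\ref{lemma7.5}, guarantees $\rho_i \in (0,1)$, and $\rho_2$ stays bounded since the meshes only get finer). Using $\|u_{k+1}-u_k\|_1^2 \simeq d_k \to 0$, the elementary recursion $o_{k+1} \le \rho_1 o_k + b_k$ with $\rho_1 < 1$ and $b_k \to 0$ forces $o_k \to 0$ (a geometric-series estimate). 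Finally the general inequality of Lemma~\ref{lemma7.4}, $\|u-u_k\|_1^2 \le C_5\|u_{k+1}-u_k\|_1^2 + C_6\,o_k$, together with Lemma~\ref{lm:energynorm}, gives $\delta_k \lesssim d_k + o_k \to 0$. In both cases $\delta_k \to 0$, hence $\|u-u_k\|_1 \to 0$.

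The routine parts --- the two-case bookkeeping, the geometric-series estimate for the recursion, and chaining the energy-norm equivalences --- are straightforward once the ingredients are in place; the real work was done in Lemmas~\ref{lemma7.4}, \ref{lemma7.5}, and~\ref{lm:energynorm}. The one point that needs genuine care, and is really the heart of the argument, is the interplay between the true error and the oscillation: one cannot reduce the error without controlling $o_k$, and the only mechanism that reduces $o_k$ is the conditional step~(2). The case split is designed so that whenever step~(2) is \emph{not} firing, the switch guarantees $o_k$ is already dominated by the estimator and the plain reduction (stronger form of Lemma~\ref{lemma7.4}) takes over, while if step~(2) fires persistently, Lemma~\ref{lemma7.5} drives $o_k \to 0$. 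Verifying that $\rho_1,\rho_2$ can be made simultaneously less than $1$ uniformly along the whole sequence is the main technical obligation, and it rests on the mesh-size and $\theta_2$ conditions already noted.
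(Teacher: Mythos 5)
Your proof is correct, and it takes a genuinely different route from the paper's. The paper aims for a geometric contraction: in the ``easy'' case it shows $\delta_{k+1}\le(1-1/C)\delta_k$ directly, and in the ``hard'' case it combines $\Lambda_1\delta_k\le d_k+\Lambda_2 o_k$ with the oscillation reduction, choosing scalar weights $\beta$ and $\gamma=(1-\beta)/\rho_2$ carefully so that the single quantity $\delta_k+\gamma o_k$ contracts by the factor $\max(\alpha,\mu)<1$. Your argument instead leans on the much softer observation that $\delta_k$ is nonincreasing (nestedness of $V_D^{h_k}$ plus the variational characterization of $u_k$ give $d_k\ge0$), hence convergent, so $d_k\to0$ automatically; you then split on whether step~(2) fires only finitely or eventually always, and in each branch a previously established lemma turns $d_k\to0$ (and, in the second branch, the recursion $o_{k+1}\le\rho_1 o_k+b_k$ with $b_k\to0$) into $\delta_k\to0$. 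What the paper's approach buys, when carried through carefully, is a geometric \emph{rate}; what yours buys is a cleaner treatment of the alternation between the two marking regimes --- your exhaustive dichotomy (``infinitely often skipped'' versus ``eventually always applied'') is airtight, whereas the paper's proof implicitly treats each regime as if it persists for all $k$ and never explicitly handles the mixed case. Your approach is also slightly more robust in that it only needs $\rho_2$ bounded, not a fixed constant, so the mesh-dependence of $\rho_2=Ch^2$ causes no friction, whereas the paper's choice $\gamma=(1-\beta)/\rho_2$ tacitly treats $\rho_2$ as level-independent. Both arguments prove the stated theorem, which asks only for convergence, not a rate.
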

\unskip

\begin{proof}
By Lemma \ref{lm:energynorm}, we need only to show $\delta^k\rightarrow 0$
as $k\rightarrow 0$. We first discuss the easier case: When ${\rm osc}_H$ 
is the high-order term in the sense that the inequalities (\ref{small})
and (\ref{small2}) do not hold, we have the error reduction
$$
\|u-u_H\|^2_1\leq C\|u_h-u_H\|^2.
$$
Using Lemma \ref{lemma7.5} and (\ref{energy}), we have
$$
E(u_H)-E(u)\leq C(E(u_H)-E(u_h)),
$$
which is equivalent to $\delta _H\leq C\delta _H-C\delta _h.$ 
Then $\delta _h\leq (1-1/C)\delta _H,$ and thus
$$
\delta ^k\leq \alpha ^k \delta ^0, \quad \text{with}\; \alpha=(1-1/C)\in (0,1).
$$
When the oscillation is not small, i.e., (\ref{small}) or
(\ref{small2}) holds, we can get only
\begin{equation}\label{eqn:1}
\Lambda _1\delta _{k}\leq d_k+\Lambda _2 o_k, \quad \text{with}\;
\Lambda _1\in (0,1).
\end{equation}
We shall use techniques from~\cite{Mekchay.K;Nochetto.R2005} to prove 
the convergence. Recall that we have
\begin{equation}\label{eqn:2}
\delta _{k+1}=\delta _{k}-d_k.
\end{equation}
For any $\beta \in (0,1)$, $\beta \times (\ref{eqn:1})+(\ref{eqn:2})$ 
gives
\begin{equation}\label{eqn:3}
\delta _{k+1}\leq \alpha \delta _k +\beta \Lambda _2 o_k-(1-\beta)d_k,
\quad \text{with}\; \alpha = (1-\beta \Lambda _1)\in (0,1).
\end{equation}
Recall that we have
\begin{equation}\label{eqn:4}
o_{k+1}\leq \rho _1o_k+\rho _2 d_k.
\end{equation}
Let $\gamma = (1-\beta)/\rho _2$; $(\ref{eqn:4})\times \gamma +
(\ref{eqn:3})$ gives\vspace{-3pt}
$$
\delta _{k+1}+\gamma o_{k+1}\leq \alpha \delta _{k}+ (\beta \Lambda_2 
+ \rho _1\gamma)o_k.
$$
Let $1>\mu > \rho _1$. We choose\vspace{-3pt}
$$
\beta = \frac{\frac{\mu - \rho _1}{\rho _2}}{\Lambda _2+\frac{\mu
 -\rho _1}{\rho _2}}\in (0,1)
$$
to get\vspace{-3pt}
$$
\delta _{k+1}+\gamma o_{k+1}\leq \max (\alpha, \mu) (\delta_k+\gamma o_k),
$$
which also implies the convergence of our AFEM.%
\qquad\end{proof}\enlargethispage{-6pt}

\section{Summary and concluding remarks}\label{sec8}
In this article we have established a number of basic theoretical 
results for the nonlinear Poisson--Boltzmann equation and for its 
approximation using finite element methods. We began by showing that 
the problem is well-posed through the use of an auxiliary or {\em 
regularized} version of the equation and then established a number
of basic estimates for the solution to the regularized problem. The 
Poisson--Boltzmann equation does not appear to have been previously 
studied in detail theoretically, and it is hoped that this paper
will help provide 
molecular modelers with a better theoretical foundation for their 
analytical and computational work with the Poisson--Boltzmann equation. 
The bulk of this article then focused on designing a numerical 
discretization procedure based on the regularized problem and on
establishing rigorously that the discretization procedure converged to 
the solution to the original (nonregularized) nonlinear
Poisson--Boltzmann equation. Based on these results, we also designed 
an adaptive finite element approximation procedure and then gave a
fairly involved technical argument showing that this adaptive 
procedure also converges in the limit of mesh refinement. This 
article apparently gives the first convergence result for a numerical 
discretization technique for the nonlinear Poisson--Boltzmann equation 
with delta distribution sources, and it also introduces the first provably
convergent adaptive method for the equation. This last result is one 
of only a handful of convergence results of this type for nonlinear 
elliptic equations (the others being 
\cite{Dorfler.W1995,Veeser.A2002,Carstensen.C2006}). 

\looseness=-1Several of the theoretical results in the paper rest on some basic
assumptions on the underlying simplex mesh partitioning of the domain, 
namely, assumptions (A1)--(A4); we now make a few comments on these
assumptions. To begin, we required a refinement procedure that would 
preserve the $L^{\infty}$ norm estimate of $u_h$. Meeting this
requirement in the two-dimensional setting is relatively easy; one can 
choose $\Omega$ as a square and start with a uniform mesh of a 
square. For the refinement methods, one can use longest edge or newest 
vertex bisection. Subdivisions obtained by these two methods contain 
only one type of triangle: isosceles right triangles. Thus the
assumption (A1$'$) always holds. In the three-dimensional setting, this
is more tricky. Bisection will introduce some obtuse angles in the 
refined elements. One needs to use a three-dimensional analogue of 
red-green 
refinement~\cite{Bornemann.F;Erdmann.B;Kornhuber.R1993}. However, this 
will not produce nested subspaces; i.e., assumption (A4) is 
invalid. For convergence analysis based on red-green refinement, we 
could use the technique in~\cite{Stevenson.R2005b} to relax the 
assumption (A4). Since this will only add technical difficulties but 
does not exhibit principally new phenomena, we omit them here. Another
approach to relax the assumption (A1) is to use pointwise a
posteriori error estimates developed
in~\cite{Nochetto.R;Schmidt.A;Siebert.K;Veeser.A2006} for monotone 
semilinear equations. We can start with a quasi-uniform triangulation
and refine the triangulation according to the pointwise a
posteriori error estimator to make sure $\|u-u_h\|_{\infty}\leq C$.
Then together with the $L^{\infty}$ norm estimate of $u$, by the
triangulation inequality $\|u_h\|_{\infty}\leq 
\|u\|_{\infty}+\|u-u_h\|_{\infty}\leq C$, we have the control of 
$\|u_h\|_{\infty}$. Note that the pointwise a posteriori
error estimates developed 
in~\cite{Nochetto.R;Schmidt.A;Siebert.K;Veeser.A2006} are for elliptic-type equations with continuous coefficients. To use this approach we
need to adapt the estimate for the jump coefficients case which will 
be a further research topic. 

\looseness=-1Assumption (A2) is needed to approximate the interface well in an
a priori manner. Of course, one can include this approximation
effect into the a posteriori error estimate (namely, the term
$\|F(u_h)-F_h(u_h)\|$) and use this to drive local refinement to 
improve the approximation to the desired level for the assumption or 
use the strategy for the oscillation to include it in the refinement 
loop. However, we note that, since the interface is known  a
priori from, e.g., x-ray crystallography information, we do not need
to solve the equation (which is generally the more expensive route) to 
solve this problem; we view this as primarily a mesh generation 
problem. Robust algorithms to produce well-shaped tetrahedral meshes 
which are constrained to exactly match some interior embedded 
two-manifold are available in the literature; for example, see
\cite{Chen.L;Holst.M2006,Alliez.P;Cohen-Steiner.D;Yvinec.M;Desbrun.M2005a}. 
A simple algorithm can be based entirely on local refinement with the 
marking and refinement strategy, but without having to solve the PBE to 
produce error indicators: If the element cross the interface, then it
gets refined. This strategy was employed in 
~\cite{Baker.N;Holst.M;Wang.F2000}.

After this work was done, we learned that the assumption (A3) is
not needed for the convergence of adaptive finite element methods for a
linear elliptic equation. As an ongoing project, we are extending it 
to the nonlinear Poisson--Boltzmann equation.

Finally, we make some remarks on the practical realization of a 
convergent discretization procedure based on the two-way (or 
three-way) expansion into a known singular function and solution(s) 
of an associated regularized version of the problem. Methods for 
building high-quality approximate solutions of the regularized nonlinear 
PBE, either by solving (\ref{RPBE1})--(\ref{RPBE2}) at once or by
solving for the linear and nonlinear pieces separately by solving 
(\ref{linear})--(\ref{nonlinear}) and then adding the solutions 
together, are well-understood. The techniques described in 
~\cite{Holst.M;Baker.N;Wang.F2000}, taken together with the 
approximation framework and the adaptive algorithm proposed in the 
present article, moves us a step closer to the goal of a complete 
optimal solution to this problem, in terms of approximation quality 
for a given number of degrees of freedom, computational complexity of 
solving the corresponding discrete equations, and the storage 
requirements of the resulting algorithms. What remains is simply the 
cost of evaluating the singular function $G$ in forming the source 
terms in (\ref{RPBE1}) or (\ref{linear}). The source terms are 
evaluated using numerical quadrature schemes: sampling the integrand 
at specially chosen discrete points in each element and then summing
the results up using an appropriate weighting. This is equivalent to 
computing all pairwise interactions between the collection of 
quadrature points (a fixed constant number of points per simplex) and 
the number of charges forming $G$. Given that $G$ is typically formed 
from at most a few thousand charges, the algorithm evaluating $G$ at 
the quadrature points should scale linearly with the number of 
quadrature points, which is a (small) constant multiple of the number 
of simplices. This can be accomplished using techniques such 
distance-classing and fast multiple-type methods.

\bibliographystyle{abbrv}
\bibliography{../bib/books,../bib/papers,../bib/mjh,../bib/library,../bib/ref-gn,../bib/coupling,../bib/pnp}


\vspace*{0.5cm}

\end{document}